\setlist{itemjoin ={,\enspace},itemjoin* = {\enspace}}
\newcommand*\bigcdot{\mathpalette\bigcdot@{.5}}
\newcommand*\bigcdot@[2]{\mathbin{\vcenter{\hbox{\scalebox{#2}{$\m@th#1\bullet$}}}}}
\newenvironment{myfont}{\fontfamily{phv}\selectfont}{\par}
\def\beg{\begin{myfont}}
\def\en{\end{myfont}}
\def\tb{\textbf}
\def\bp{\begin{proof}}
\def\ep{\end{proof}}
\def\be{\begin{enumerate}}
\def\ee{\end{enumerate}}
\def\bi{\begin{itemize}}
\def\ei{\end{itemize}}
\def\multiset#1#2{\ensuremath{\left(\kern-.3em\left(\genfrac{}{}{0pt}{}{#1}{#2}\right)\kern-.3em\right)}}
\newtheorem{lemma}{Lemma}[section]
\newtheorem{theorem}[lemma]{Theorem}
\newtheorem{corollary}[lemma]{Corollary}
\newtheorem{proposition}[lemma]{Proposition}
\theoremstyle{definition}
\newtheorem{definition}[lemma]{Definition}
\newtheorem{remark}[lemma]{Remark}
\title{Classifications of $\Gamma$-colored $d$-complete posets and \\ upper $P$-minuscule Borel representations\thanks{This main results of this paper formed part of a 2019 doctoral thesis \cite{Str} written under the supervision of Robert A. Proctor at the University of North Carolina.}}
\author{Michael C. Strayer \\ Hampden--Sydney College \\ Hampden--Sydney, VA 23943 U.S.A. \\ mstrayer@hsc.edu}
\date{August 15, 2020}                                           
\begin{document}

\maketitle 

\begin{spacing}{1.1}

    
    
    
    
    
    
    
    
    
    



\begin{abstract}
The $\Gamma$-colored $d$-complete posets correspond to certain Borel representations that are analogous to minuscule representations of semisimple Lie algebras.
We classify $\Gamma$-colored $d$-complete posets which specifies the structure of the associated representations.  
We show that finite $\Gamma$-colored $d$-complete posets are precisely the dominant minuscule heaps of J.R. Stembridge.  These heaps are reformulations and extensions of the colored $d$-complete posets of R.A. Proctor.  
We also show that connected infinite $\Gamma$-colored $d$-complete posets are precisely order filters of the connected full heaps of R.M. Green.
\end{abstract}

\vspace{.2in} 

\noindent 2020 Mathematics Subject Classification: Primary 05E10; Secondary 17B10, 17B67, 06A11

\noindent Keywords: $d$-Complete, Dominant minuscule heap, Full heap, Borel representation, $\lambda$-Minuscule

\vspace{-.2in}



















\end{spacing}


\begin{spacing}{1.25}

\section{Introduction}

The main objects of study in this paper are locally finite partially ordered sets that are colored by the nodes of a Dynkin diagram.  Each Dynkin diagram corresponds to a Kac--Moody algebra, its Borel subalgebra, and a Weyl group.  Our motivations include representation theory, but our techniques are entirely combinatorial.

This paper and \cite{PaperClassifyMinuscule} are sequels to \cite{Unify}.  In that paper and \cite{Str}, we introduced the $\Gamma$-colored $d$-complete and $\Gamma$-colored minuscule posets.  These definitions were made without regard to finite or infinite poset cardinality.  The main result of \cite{Unify} showed that these posets correspond precisely to representations of Kac--Moody algebras (or subalgebras) that generalize the minuscule representations of semisimple Lie algebras.  In this paper we classify the $\Gamma$-colored $d$-complete posets, and in \cite{PaperClassifyMinuscule} we classify the $\Gamma$-colored minuscule posets.  These classifications thus provide the possible structures of the above representations.  For connected posets, these classifications are summarized in Table \ref{RepClass}.  We describe the posets appearing in the bottom row of this table in the next five paragraphs and their representations in Section \ref{RepInteractions}.

\begin{table}[t!]
    \centering
    \begin{tabular}{|l||c|c|}
        \hline 
         \textbf{Connected posets} & \textbf{Finite} & \textbf{Infinite} \\
         \hline 
         \hline 
        \textbf{$\Gamma$-colored minuscule} & Colored minuscule posets & Full heaps \\
        First introduced by: & R.A. Proctor (1984) & R.M. Green (2007) \\
        \hline 
        \textbf{$\Gamma$-colored $d$-complete} & Dominant minuscule heaps & Filters of full heaps \\
        First introduced by: & J.R. Stembridge (2001) & This author (2019) \\
        \hline
    \end{tabular}
    \caption{The classifications of connected $\Gamma$-colored minuscule and $\Gamma$-colored $d$-complete posets}
    \label{RepClass}
\end{table}

Working in the simply laced setting, R.A. Proctor defined \cite{Wave} and classified \cite{DDCT} the $d$-complete posets.
He defined both colored and uncolored versions of these finite posets using properties that govern the structure of certain intervals within the poset. 
J.R. Stembridge used the notion of a heap to refine Proctor's definition with a new set of coloring axioms while also extending to the multiply laced setting in \cite{Ste}.  He called these new colored posets dominant minuscule heaps.
Proctor and Stembridge developed these posets to study certain ``$\lambda$-minuscule'' Weyl group elements introduced by D. Peterson \cite{Car}.
Section 10 of \cite{Wave} and Section 3 of \cite{Ste} respectively describe the correspondence between their colored posets and these Weyl group elements.  Linear extensions of these posets correspond to reduced expressions for their corresponding $\lambda$-minuscule Weyl group elements. 
Colored $d$-complete posets and dominant minuscule heaps generalize the colored minuscule posets of \cite{BLPP} appearing in the top left entry of Table \ref{RepClass}.

Proctor classified the $d$-complete posets in \cite{DDCT} working in the uncolored setting.  He demonstrated how each connected $d$-complete poset could be written as a ``slant sum'' of ``slant irreducible'' $d$-complete posets, and then he classified these slant irreducible $d$-complete posets into fifteen families.  Given any uncolored $d$-complete poset, a unique colored $d$-complete poset can be produced (and vice versa), so this classified the colored $d$-complete posets as well.  Thus Stembridge only needed to classify the slant irreducible dominant minuscule heaps in the multiply laced setting.  He did so in \cite{Ste}, producing two new families of slant irreducible dominant minuscule heaps.  We outline this slant sum decomposition in Section \ref{SectionClassifydComplete}.

Proctor's $d$-complete posets have been used extensively; see Section 12 of \cite{ProScop}.  That paper extended uncolored $d$-complete posets to the infinite (but locally finite) setting.  Proctor has also considered the related question of defining infinite colored $d$-complete posets (personal communication).  That question was answered in \cite{Str,Unify} by defining $\Gamma$-colored $d$-complete posets since they generalize the finite colored $d$-complete posets and were created for their application to Kac--Moody representation theory.  K. Nakada has also proposed an answer to this question using coroots of a Kac--Moody algebra; see \cite{Nak}.  The connected infinite versions of $\Gamma$-colored $d$-complete posets are order filters of connected full heaps of R.M. Green.

Green developed full heaps as infinite analogs of the minuscule heaps of Stembridge using several similar properties.  The primary difference is cardinality; while dominant minuscule heaps are finite, full heaps have the property that every subposet consisting of all elements of a given color is isomorphic to $\mathbb{Z}$.  Therefore full heaps are unbounded above and below.  Similarly to colored $d$-complete posets and dominant minuscule heaps, full heaps were developed with algebraic applications in mind.  Green used them to construct representations of affine Kac--Moody algebras in \cite{Gre1} and representations of affine Weyl groups in \cite{Gre2}.   They formed the main objects of study in his Cambridge monograph \cite{Gre}, wherein he used them in many applications to representation theory and algebraic geometry.  Green noted that full heaps are infinite analogs of colored minuscule posets and representations of affine algebras constructed with full heaps are infinite-dimensional analogs of finite-dimensional minuscule representations of semisimple Lie algebras.  

Green classified all full heaps colored by affine Dynkin diagrams in Theorem 6.6.2 of \cite{Gre}.  Z.S. McGregor-Dorsey showed in his doctoral thesis \cite{McD} under Green that the connected components of finite Dynkin diagrams that color full heaps must have affine type.  Therefore the list produced by Green is a complete list of all full heaps colored by connected Dynkin diagrams with finitely many nodes.


Our main result is the classification of all $\Gamma$-colored $d$-complete posets in Theorem \ref{TheoremClassify}.  We first show finite $\Gamma$-colored $d$-complete posets are precisely the dominant minuscule heaps of Stembridge in Theorem \ref{TheoremMSequivJRS}.  We show connected infinite $\Gamma$-colored $d$-complete posets are precisely order filters of the connected full heaps of Green in Theorem \ref{TheoremClassifyInfinite}.  We use these results and apply the previous classifications of Proctor, Stembridge, Green, and McGregor-Dorsey to obtain the classification of all $\Gamma$-colored $d$-complete posets.

Green's work with Kac--Moody representations built from colored posets in \cite{Gre1,Gre} was the principal antecedent and inspiration for our paper \cite{Unify}. 
Working in the simply laced case, that paper built upon the representation results of Green; one of its main results showed that $\Gamma$-colored $d$-complete posets are both necessary and sufficient to build ``upper $P$-minuscule'' Borel representations from colored posets.  The main results in \cite{Unify} also appeared in \cite{Str} which included the multiply laced case.
For finite posets, upper $P$-minuscule representations are closely related to Demazure modules indexed by $\lambda$-minuscule Weyl group elements for dominant integral weights $\lambda$.  For infinite posets, these representations are new. 

We give colored poset definitions in Section \ref{SectionDefinitions}.  Section \ref{PropertyRelationships} is dedicated to developing general coloring property relationships needed in the rest of the paper; there we obtain the equivalence between finite $\Gamma$-colored $d$-complete posets and dominant minuscule heaps.  Propositions \ref{PropProctorProof} and \ref{PropLCB2} in Section \ref{SectionTwoImportantPropositions} are used heavily in Section \ref{MainProof}, wherein we show connected infinite $\Gamma$-colored $d$-complete posets are precisely order filters of connected full heaps.  The classification of $\Gamma$-colored $d$-complete posets is given in Section \ref{SectionClassifydComplete} after describing in more depth the previous work of Proctor, Stembridge, Green, and McGregor-Dorsey.  We connect our work to $\lambda$-minuscule Weyl group elements and Kac--Moody representation theory in Section \ref{RepInteractions}.



\section{Colored poset definitions}\label{SectionDefinitions}

Let $P$ be a partially ordered set.  We follow \cite{Sta} for the following commonly used terms: interval, covering relations and the Hasse diagram, order ideal and order filter, saturated chain, antichain, linear extension, and order dual poset $P^*$. 
We use letters such as $z,y,x,\dots$ to denote elements of $P$.  Let $x,y \in P$.  If $x$ is covered by $y$, then we write $x \to y$.  We say that $x$ and $y$ are \emph{neighbors} if $x \to y$ or $y \to x$.  If $x \le y$, then $(x,y)$ and $[x,y]$ are respectively the open and closed intervals between $x$ and $y$.  We require $P$ to be \emph{locally finite}, meaning that all intervals in $P$ are finite.

Let $r \ge 1$ and let $P_1,\dots,P_r$ be disjoint posets.  The \emph{disjoint union} of $P_1,\dots,P_r$ is the poset $P = \bigcup_{i=1}^r P_i$ for which $x \le y$ in $P$ when there is some $1 \le k \le r$ such that $x \le y$ in $P_k$.  A poset is \emph{connected} if it cannot be written as the disjoint union of two or more of its nonempty subposets.  If $P$ is the disjoint union of $P_1,\dots,P_r$ and each of the posets $P_1,\dots,P_r$ is connected, then they are the \emph{connected components} of $P$.

Let $\Gamma$ be a finite set.  We use letters such as $a,b,c,\dots$ to denote the elements of $\Gamma$ and call them \emph{colors}.  
We fix integers $\theta_{ab}$ for $a,b \in \Gamma$ subject to the following requirements:
\begin{enumerate}[(i),nosep]
    \item For all $a \in \Gamma$ we have $\theta_{aa} = 2$.
    \item For all distinct $a,b \in \Gamma$, we have $\theta_{ab} \le 0$ and $\theta_{ba} \le 0$.
    \item For all distinct $a,b \in \Gamma$, we have $\theta_{ab} = 0$ if and only if $\theta_{ba} = 0$.
\end{enumerate}
If the elements of $\Gamma$ are ordered, then these requirements make $[\theta_{ab}]$ a \textit{generalized Cartan matrix}; see Section \ref{RepInteractions}.  Let $a,b \in \Gamma$ be distinct.  We say $a$ and $b$ are \emph{distant} when $\theta_{ab} = 0$ and \emph{adjacent} when $\theta_{ab} < 0$.  In the latter case, we write $a \sim b$ and say $a$ is \textit{$k$-adjacent to $b$} (respectively $b$ is \emph{$l$-adjacent to $a$}) when $\theta_{ab} = -k$ (respectively $\theta_{ba} = -l$).

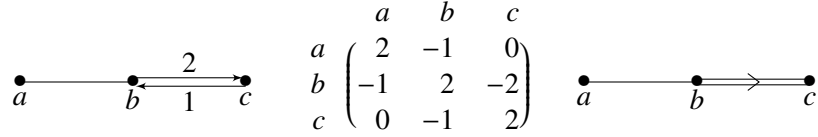
\begin{figure}[t!]
    \centering
    \begin{tikzpicture}[scale=0.75]
    \tikzset{edge/.style = {->,> = latex'}}
    
        \node(A) at (-6,0){$\bullet$};
        \node(B) at (-4,0){$\bullet$};
        \node at (-8,0){$\bullet$};
        \node at (4,0){$\bullet$};
        \node at (6,0){$\bullet$};
        \node at (2,0){$\bullet$};
        
        \node at (-8,-.3){$a$};
        \node at (-6,-.3){$b$};
        \node at (-4,-.3){$c$};
        \node at (2,-.3){$a$};
        \node at (4,-.3){$b$};
        \node at (6,-.3){$c$};
        
        \node at (-5,.35){$2$};
        \node at (-5,-.35){$1$};
        
        \draw (4.9,.15) -- (5.1,0) -- (4.9,-.15);
        
        \draw (4,.05) -- (6,.05);
        \draw (4,-.05) -- (6,-.05);
        \draw (-8,0) -- (-6,0);
        \draw (2,0) -- (4,0);
        
        \draw[edge] (-6,0.075)  to (-4.05,.075);
       \draw[edge] (-4,-0.075) to (-5.95,-.075);
        
        \node at (-1,0){$\begin{blockarray}{rrrr} 
         & a & b & c \\
         \begin{block}{r(rrr)}
        a & 2 & -1 & 0 \\
        b & -1 & 2 & -2 \\
        c & 0 & -1 & 2 \\
        \end{block}
        \end{blockarray}$};
    \end{tikzpicture}
    \caption{From left-to-right: A Dynkin diagram $\Gamma$, a generalized Cartan matrix for $\Gamma$ (using the transpose convention of \cite{Ste}), and the Dynkin diagram of finite type $B_3$ traditionally used for $\Gamma$.}
    \label{DynkinFigure}
\end{figure}

We describe an equivalent way to view $\Gamma$ and the integers $\theta_{ab}$ for $a,b \in \Gamma$ as a graph with finitely many nodes and no loops.
Let $a,b \in \Gamma$ be distinct.  If $a$ and $b$ are distant, then there is no edge between $a$ and $b$.  Now suppose $a \sim b$.  If $\theta_{ab}\theta_{ba} = 1$, then there is a single undirected edge between $a$ and $b$.  
If $\theta_{ab}\theta_{ba} > 1$, then there is a directed edge from $a$ to $b$ (respectively from $b$ to $a$) decorated with the integer $-\theta_{ab}$ (respectively $-\theta_{ba}$).
The graph $\Gamma$ is a \textit{Dynkin diagram}.  See Figure \ref{DynkinFigure}; in that figure, the color $b$ is $1$-adjacent to the color $a$ and $2$-adjacent to the color $c$.  The Dynkin diagram $\Gamma$ is \emph{acyclic} if the underlying simple graph, obtained by replacing each pair of directed edges by a single undirected edge, is acyclic.  If $\Gamma$ is a simple graph, then $\theta_{ab} \in \{-1,0,2\}$ for all $a,b \in \Gamma$ and we say $\Gamma$ is \emph{simply laced}.  Otherwise $\Gamma$ is \emph{multiply laced}.

We equip $P$ with a surjective \emph{coloring function} $\kappa : P \to \Gamma$ onto the nodes of $\Gamma$ and say that $P$ is \emph{$\Gamma$-colored}.  We define properties that $P$ may satisfy with respect to this coloring:
\begin{itemize}[nosep]
    \item [] (EC) Elements with equal colors are comparable.
    \item [] (NA) Neighbors have adjacent colors.
    \item [] (AC) Elements with adjacent colors are comparable.
\end{itemize}
For each color $a$, we define $P_a := \{ x \in P \ | \ \kappa(x) = a\}$.  Let $a \in \Gamma$ and $x,y \in P$ with $x < y$.  We say $x$ and $y$ are \emph{consecutive elements of the color $a$} if $x,y \in P_a$ and $(x,y)$ contains no elements of the color $a$.
\begin{itemize}[nosep]
    \item [] (ICE2) For every $a \in \Gamma$, if $x < y$ are consecutive elements of the color $a$, then $\sum_{z \in (x,y)} -\theta_{\kappa(z),a} = 2$.
\end{itemize}
For a subset $S \subseteq P$ and element $x \in S$, we define the set $U(x,S) := \{y \in S \ | \ \text{$y > x$ and $\kappa(y) \sim \kappa(x)$}\}$.  Dually, define the set $L(x,S) := \{y \in S \ | \ \text{$y < x$ and $\kappa(y) \sim \kappa(x)$}\}$.  These sets will be used in Section \ref{SectionTwoImportantPropositions} when $S$ is an ideal or filter of $P$, but for now we only need the case $S = P$.  Let $k \ge 1$.
\begin{itemize}[nosep]
    \item [] (UCB$k$) For every $a \in \Gamma$, if $x$ is maximal in $P_a$, then $U(x,P)$ is finite and $\sum_{y \in U(x,P)} -\theta_{\kappa(y),a} \le k$.
    \item [] (LCB$k$) For every $a \in \Gamma$, if $x$ is minimal in $P_a$, then $L(x,P)$ is finite and $\sum_{y \in L(x,P)} -\theta_{\kappa(y),a} \le k$.
\end{itemize}

\begin{remark}
The properties ICE2, UCB$k$, and LCB$k$ for $k \ge 1$ all restrict the ``census'' of elements with colors that are adjacent to some given color within certain convex subsets of $P$.  For a given color $a$ and convex subset of $P$, a member $y$ of that convex subset is included in the census if $\kappa(y) \sim a$.  The count of the ``household'' of $y$ is $-\theta_{\kappa(y),a}$.  For example, the property ICE2 says  the census within an interval between consecutive elements of a given color must be two.
These properties were defined in \cite{Unify} for simply laced Dynkin diagrams and were respectively called I2A, Mx$k$GA, and Mn$k$LA.  We have changed their names here to reflect the interval census being equal to two and upper census and lower census being bounded by $k$.  The properties UCB$k$ and LCB$k$ are the \emph{frontier census properties}.  The property LCB2 plays a key role in Section \ref{MainProof}; we give sufficient conditions for this property in Proposition \ref{PropLCB2}.  The most important frontier census property is UCB1 since it is one of the properties required for a poset to be $\Gamma$-colored $d$-complete.
\end{remark}

We use the above properties to give the main colored poset definitions in this paper.
\begin{definition}\label{Gcdc}
A \emph{$\Gamma$-colored $d$-complete} poset is a locally finite $\Gamma$-colored poset (of any cardinality) that satisfies EC, NA, AC, ICE2, and UCB1.  If it also satisfies LCB1, then it is a \emph{$\Gamma$-colored minuscule} poset.
\end{definition}

Stembridge defined the dominant minuscule heaps in \cite{Ste}.  We provide this definition, translated to our conventions for notation and terminology.

\begin{definition}\label{dominantminusculeheap}
A \emph{dominant minuscule heap} is a finite $\Gamma$-colored poset that satisfies
\begin{itemize}[nosep]
    \item [] (S1) All neighbors in $P$ have colors that are equal or adjacent in $\Gamma$, and the colors of incomparable elements are distant.
    \item [] (S2) For every $a \in \Gamma$, the open interval between any two consecutive elements of color $a$ either contains (i) exactly two elements whose colors are adjacent to $a$, and their colors are 1-adjacent to $a$, or (ii) exactly one element, and the color of this element is 2-adjacent to $a$.
    \item [] (S3) For every $a \in \Gamma$, an element that is maximal in $P_a$ is covered by at most one element, and this element is maximal among all elements of some color that is 1-adjacent to $a$.
    \item [] (S4) The Dynkin diagram $\Gamma$ is acyclic.
\end{itemize}
\end{definition}


\noindent Stembridge does not require $\kappa$ to be surjective, and his final property is merely that the colors appearing in $P$ index an acyclic subset of $\Gamma$.  However, when he classifies the dominant minuscule heaps in \cite{Ste}, he does so assuming that $\kappa$ is surjective.  
Since we only use his combinatorial classification, there is no harm in continuing to assume $\kappa$ is surjective and using S4 in the form presented here.  All of Stembridge's properties may be satisfied by a poset of any cardinality.

Inspired by Stembridge's dominant minuscule heaps, Green introduced a new class of infinite $\Gamma$-colored posets in \cite{Gre1,Gre2}; these posets were the central object in \cite{Gre}.  We provide the definition given in \cite{Gre}, also translated to our conventions for notation and terminology.

\begin{definition}\label{fullheap}
A \emph{full heap} is a locally finite $\Gamma$-colored poset that satisfies
\begin{itemize}[nosep]
    \item [] (G1) For every $a \in \Gamma$ and every pair of adjacent colors $b,c \in \Gamma$, the subsets $P_a$ and $P_b \cup P_c$ of $P$ are chains in $P$.
    \item [] (G2) The partial order on $P$ is the minimal partial order extending the given partial order on the above chains $P_a$ and $P_b \cup P_c$.
    \item [] (G3) For every $a \in \Gamma$, the set $P_a$ is isomorphic as a poset to $\mathbb{Z}$.
    \item [] (G4) For every $a,b \in \Gamma$, if $a \sim b$ and $x \in P_a$, then $x$ has a neighbor in $P_b$.
    \item [] (G5) For every $a \in \Gamma$, if $x < y$ are consecutive elements of the color $a$, then $\sum_{z \in [x,y]} \theta_{\kappa(z),a} = 2$.
\end{itemize}
\end{definition}

\noindent Green does not require $\Gamma$ to have finitely many nodes.  He also does not require $\kappa$ to be surjective when he first defines a \emph{heap} (any poset satisfying G1 and G2); note the surjectivity of $\kappa$ follows from G3.  The properties G1, G2, G4, and G5 may be satisfied by a poset of any cardinality.


\section{Coloring property relationships and finite \texorpdfstring{$\Gamma$}{Gamma}-colored \texorpdfstring{$d$}{d}-complete posets}\label{PropertyRelationships} 

In this section we establish the relationships we need between various poset coloring properties.  
Corollary \ref{CorFullHeapEquiv} establishes a new characterization of full heaps, and Theorem \ref{TheoremMSequivJRS} shows that finite $\Gamma$-colored $d$-complete posets are precisely the dominant minuscule heaps of Stembridge.  Our first result shows that the definition of full heap can be shortened.

\begin{lemma}\label{LemRedundant}
Let $P$ be a $\Gamma$-colored poset that satisfies G1, G2, G3, and G5.  Then $P$ also satisfies G4.  Hence G4 can be removed from the definition of full heap.
\end{lemma}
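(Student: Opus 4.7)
The plan is, given $x \in P_a$ and $b \sim a$, to exhibit a specific $P_b$-element that covers or is covered by $x$ in $P$. I will use G1, G2, and G3 to reduce the search to two natural candidates, and then use G5 to verify that at least one of them really is a cover.

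First I would establish that $P_a \cup P_b$ is a chain isomorphic to $\mathbb{Z}$. G1 gives the chain property; G3 ensures $P_a$ (and hence $P_a \cup P_b$) has neither a maximum nor a minimum; and local finiteness then pins $P_a \cup P_b$ down to $\mathbb{Z}$. A short local-finiteness argument also rules out $x$ being above or below all of $P_b$, since an interval from $x$ to the appropriate infinite tail of $P_b$ would contain infinitely many $P_b$-elements. Hence there exist consecutive $y^- < y^+$ in $P_b$ with $y^- < x < y^+$. Any $P_b$-neighbor of $x$ in $P$ must be one of $y^\pm$, because a $P_b$-element farther from $x$ is separated from $x$ by $y^-$ or $y^+$.

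The main step is to show that at least one of the two situations holds: $y^+$ covers $x$, or $x$ covers $y^-$. Let $x^-, x^+$ denote the predecessor and successor of $x$ in $P_a$, which exist by G3. Applying G5 to $[y^-, y^+]$ gives census $2$ with respect to $b$, and since $a \sim b$, each $P_a$-element of $(y^-, y^+)$ contributes at least $1$; thus $|P_a \cap (y^-, y^+)| \le 2$, and this set must equal $\{x\}$, $\{x^-,x\}$, or $\{x,x^+\}$. Simultaneously, G5 applied to $[x^-, x]$ and $[x, x^+]$ forces elements of colors adjacent to $a$ on both sides of $x$. If, for contradiction, neither $y^+$ covers $x$ nor $x$ covers $y^-$, then elements $v \in (x, y^+)$ and $u \in (y^-, x)$ must exist in $P$. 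Using G2 to decompose the relations $x < v$ and $u < x$ into chromatic chains, the first step above $x$ (respectively below $x$) must land in $P_{c'}$ for some color $c' \sim a$ with $c' \ne b$, forcing such $P_{c'}$-elements into $(x, x^+)$ and into $(x^-, x)$. Combining the census constraint on $[y^-, y^+]$ with those on $[x^-, x]$ and $[x, x^+]$ then over-determines the adjacency integers $\theta_{a,b}, \theta_{b,a}, \theta_{c',a}, \theta_{c',b}$, yielding the contradiction.

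The principal obstacle will be this final case analysis. After the reduction, several sub-configurations arise, distinguished by the size of $P_a \cap (y^-, y^+)$ and by whether each auxiliary color $c'$ is adjacent or distant to $b$, and each must be shown to be inconsistent with the simultaneous saturation of multiple G5 equalities. Handling the case when $c' \not\sim b$ (so that $P_{c'}$-elements do not contribute to the $b$-census) will require an additional use of G5 applied to a $P_{c'}$-interval to close the argument.
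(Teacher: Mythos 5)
Your setup is sound and matches the paper's: you locate consecutive elements $y^- < y^+$ of $P_b$ with $y^- < x < y^+$ (the paper's $u$ and $v$), and correctly note that any $P_b$-neighbor of $x$ would have to be one of these. But the route you then take to the contradiction has a genuine gap. You analyze the covers of $x$, which by G2 and G5 have colors adjacent to $a$ --- but the census constraint you need to violate is the $b$-census of $(y^-,y^+)$, and a color $c'$ adjacent to $a$ need not be adjacent to $b$, so those elements may contribute nothing. Your claim that the census equalities on $[y^-,y^+]$, $[x^-,x]$, and $[x,x^+]$ ``over-determine the adjacency integers'' is not substantiated and is false as the sole source of contradiction: take $\Gamma$ the simply laced path $b \sim a \sim c$, with $P_a\cap(y^-,y^+)=\{x,x^+\}$, two $c$-elements in $(x,x^+)$, and $(x^-,x)$ containing $y^-$ and one $c$-element covered by $x$. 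All three censuses equal $2$, all adjacency integers are $-1$, and $x$ is covered by and covers $c$-elements; no inconsistency among the $\theta$'s arises. The contradiction in that configuration only appears when you ask what covers $y^-$ on a saturated chain up to $x$: it must have color adjacent to $b$, hence color $a$, hence lie in $P_a\cap(y^-,x)=\emptyset$ --- forcing $y^-\to x$ after all. Your outline never examines the covering relations at $y^-$ or $y^+$, which is exactly where the argument must live.

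The paper's proof closes the argument in one stroke from your setup: take a saturated chain $y^-\to p\to\cdots\to x\to\cdots\to p'\to y^+$. By G2 and G5 the colors of $p$ and $p'$ are adjacent to $b$ (they are in covering relation with $b$-colored elements), and since $x$ is assumed to have no $b$-neighbor, $p$, $x$, $p'$ are three distinct elements of $(y^-,y^+)$, each contributing at least $1$ to the $b$-census, so $\sum_{w\in[y^-,y^+]}\theta_{\kappa(w),b}\le 4-3<2$, contradicting G5. I recommend replacing your deferred case analysis with this count; the intervals $[x^-,x]$ and $[x,x^+]$ and the auxiliary colors $c'$ are not needed.
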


\begin{proof}
Assume for a contradiction that $P$ fails $G4$.  Then there is a pair of adjacent colors $a,b \in \Gamma$ and an element $x \in P_a$ that has no neighbor in $P$ of color $b$.  By G3 the set $P_b$ is isomorphic to $\mathbb{Z}$, and by G1 all elements of $P_b$ are comparable to $x$.  
Then by local finiteness, the subset of $P_b$ consisting of elements less than $x$ of color $b$ is nonempty and bounded above; let $u$ be its maximal element.  Similarly, the subset of $P_b$ consisting of elements greater than $x$ of color $b$ is nonempty and bounded below; let $v$ be its minimum element.
Then $u < v$ are consecutive elements of the color $b$.  
Again using local finiteness, let $u \to y \to \cdots \to x \to \cdots \to z \to v$ be a saturated chain from $u$ to $v$ in $P$.
Since $x$ is not a neighbor to $u$ or $v$, we know $x \ne y$ and $x \ne z$.  By G2 all covering pairs in $P$ have equal or adjacent colors in $\Gamma$.  By G5 the colors of covering pairs cannot be equal.  Thus $b \sim \kappa(y)$ and $b \sim \kappa(z)$.  By assumption we also know $b \sim a = \kappa(x)$.  Hence $\theta_{\kappa(y),b} \le -1$ and $\theta_{\kappa(z),b} \le -1$ and $\theta_{\kappa(x),b} \le -1$.  The only positive numbers in the sum $\sum_{w \in [u,v]} \theta_{\kappa(w),b}$ are $\theta_{\kappa(u),b} = \theta_{\kappa(v),b} = 2$.  Thus $\sum_{w \in [u,v]} \theta_{\kappa(w),b} < 2$, contradicting G5.
\end{proof}

Finite posets satisfying the properties in the next result are the ``minuscule heaps'' of Stembridge \cite{Ste}.

\begin{proposition}\label{PropMinHeapEquiv}
Let $P$ be a $\Gamma$-colored poset.  The following are equivalent: 
\begin{enumerate}[(i),nosep]
    \item The poset $P$ satisfies EC, NA, AC, and ICE2.
    \item The poset $P$ satisfies G1, G2, and G5.
    \item The poset $P$ satisfies S1 and S2.
\end{enumerate}
\end{proposition}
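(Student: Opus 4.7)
The plan is to prove the three equivalences as two separate pairs, (i) $\Leftrightarrow$ (ii) and (i) $\Leftrightarrow$ (iii), both of which ultimately rest on a single arithmetic observation. Before tackling either pair, I would record the identity G5 $\Leftrightarrow$ ICE2: for $x < y$ consecutive of color $a$, both endpoints contribute $\theta_{aa} = 2$ to the closed-interval sum, so $\sum_{z \in [x,y]} \theta_{\kappa(z),a} = 2$ rearranges directly into $\sum_{z \in (x,y)} -\theta_{\kappa(z),a} = 2$. Thus G5 and ICE2 are literally the same property.

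For (i) $\Leftrightarrow$ (ii), unpacking G1 yields G1 $\Leftrightarrow$ EC $\wedge$ AC: the condition that $P_a$ is a chain is precisely EC, while the condition that $P_b \cup P_c$ is a chain for adjacent $b \sim c$ adds AC on top of EC. Property G2 is equivalent to the statement that every covering relation in $P$ has its endpoints in a common chain of G1, i.e., that neighbors have equal or adjacent colors. This is strictly weaker than NA, but ICE2 closes the gap: an equal-color covering pair $x \to y$ would be consecutive elements of $P_a$ with empty open interval, forcing the ICE2 sum to equal $0$ rather than $2$. Hence G1, G2, G5 imply NA, while NA trivially implies G2, and the equivalence follows.

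For (i) $\Leftrightarrow$ (iii), I would split S1 into its two clauses. The clause ``colors of incomparable elements are distant'' is the contrapositive of EC $\wedge$ AC, and the clause ``neighbors have equal or adjacent colors'' is again weaker than NA. The core of the argument is that ICE2 and S2 are equivalent by elementary arithmetic: every $z \in (x,y)$ satisfies $\kappa(z) \ne a$, so $-\theta_{\kappa(z),a}$ is $0$ when $\kappa(z)$ is distant from $a$ and a positive integer (the $k$-adjacency index) when $\kappa(z)$ is adjacent to $a$. A nonnegative integer sum equaling $2$ must decompose in exactly one of two ways: two summands of $1$ (two $1$-adjacent elements) or a single summand of $2$ (one $2$-adjacent element). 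These are the two alternatives in S2, where the ``exactly two'' and ``exactly one'' counts refer to adjacent-color elements in $(x,y)$ (distant-color elements contribute nothing to either sum). Since S2 forces $(x,y)$ to be nonempty, equal-color neighbors are ruled out and S1's weak neighbor clause is upgraded to NA.

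The main obstacle is interpretive rather than mathematical: one must read S2 so that its bijection with nonnegative decompositions of $\sum -\theta_{\kappa(z),a} = 2$ is exact. Once that is in place, the proposition becomes a translation exercise among chain-theoretic language (G1, G2), contrapositive language (S1), and census language (ICE2, G5, S2), unified by the observation that a nonempty interval census prevents equal-color covering pairs.
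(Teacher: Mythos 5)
Your overall strategy --- reducing everything to the single arithmetic fact that a sum of nonnegative adjacency indices equal to $2$ splits as $1+1$ or as a single $2$, together with the observation that a nonempty interval census rules out equal-colored covering pairs --- is the same engine that drives the paper's proof (the paper runs a cycle (i)$\Rightarrow$(ii)$\Rightarrow$(iii)$\Rightarrow$(i) rather than two separate biconditionals, but that is cosmetic). The identifications G1 $\Leftrightarrow$ EC $\wedge$ AC, G2 $\Leftrightarrow$ ``covering pairs have equal or adjacent colors'' (which needs local finiteness so that the order is the transitive closure of its coverings), and the G5/ICE2 rearrangement via $\theta_{\kappa(x),a}=\theta_{\kappa(y),a}=2$ are all correct and agree with the paper.

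There is one genuine gap, and it is exactly the point you tried to dispose of as ``interpretive rather than mathematical.'' S2(ii) as stated in the paper (and in Stembridge) says the open interval $(x,y)$ contains \emph{exactly one element}, full stop --- not exactly one element of adjacent color. Re-reading the clause so that it counts only adjacent-colored elements changes the statement: under your reading S2(ii) would tolerate extra distant-colored elements in $(x,y)$, and the resulting ``S2'' is literally ICE2 restated, so your argument toward (iii) proves something weaker than the actual property. The missing step is to show that when the census is a single term $2$ contributed by one element $w$, in fact $(x,y)=\{w\}$. This is true but is not arithmetic; it uses the neighbor-color property: take any $v\in(x,y)$ and a saturated chain from $x$ up to $v$; the element covering $x$ lies in $(x,y)$, cannot have color $a$ (by consecutiveness, once equal-colored neighbors are excluded), hence has color adjacent to $a$ and must be $w$, giving $w\le v$; the dual argument from $v$ up to $y$ gives $v\le w$, so $v=w$. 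The paper compresses this into the sentence ``Since neighbors have equal or adjacent colors by G2, we see $(x,y)=\{w\}$.'' With that step inserted your proof is complete; without it, the implication into (iii) does not establish S2 as written.
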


\begin{proof}
First suppose that (i) holds.  Then EC and AC imply G1.  By local finiteness, the order on $P$ is the reflexive transitive closure of its covering relations, that is, the minimal order given its pairs of ordered neighbors.  By NA, each pair of neighbors belongs to some chain $P_b \cup P_c$ for adjacent $b,c \in \Gamma$.  Thus G2 holds.  Let $a \in \Gamma$ and let $x < y$ be consecutive elements of the color $a$.  Since $\theta_{\kappa(x),a} = \theta_{\kappa(y),a} = 2$, the equalities in ICE2 and G5 are equivalent.  Thus (ii) holds.

Now suppose (ii) holds.  Then G1 implies incomparable elements have distant colors and G2 implies neighbors have equal or adjacent colors.  Thus S1 holds.  Let $a \in \Gamma$ and let $x < y$ be consecutive elements of the color $a$.  Since the equalities in ICE2 and G5 are equivalent, we have $\sum_{z \in (x,y)} -\theta_{\kappa(z),a} = 2$.  The integers $-\theta_{\kappa(z),a}$ for $z \in (x,y)$ are non-negative, so the sum $\sum_{z \in (x,y)} -\theta_{\kappa(z),a}$ is either $1 + 1$ or the single term 2.  If the sum is $1+1$, then S2(i) holds.  Now suppose the sum is the single term 2, so that there is only one element $w \in (x,y)$ with color adjacent to $a$.  Since neighbors have equal or adjacent colors by G2, we see $(x,y) = \{w\}$.  Hence S2(ii) is satisfied.  Thus S2 holds, so (iii) holds.

Now suppose (iii) holds.  Using S2 we see that neighbors do not have equal colors, so S1 implies that EC, NA, and AC hold.  Finally, S2 implies ICE2, so (i) holds.
\end{proof}





The previous results allow us to obtain a new characterization of full heaps.

\begin{corollary}\label{CorFullHeapEquiv}
Let $P$ be a $\Gamma$-colored poset.  The following are equivalent:
\begin{enumerate}[(i),nosep]
    \item The poset $P$ satisfies EC, NA, AC, ICE2, and G3.
    \item The poset $P$ is a full heap.
\end{enumerate}
\end{corollary}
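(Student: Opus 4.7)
The plan is to deduce the corollary directly by combining Lemma \ref{LemRedundant} and Proposition \ref{PropMinHeapEquiv}; essentially all the real work is already done there, and only G3 has to be carried along unchanged on both sides of the equivalence.

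First I would recall that by Lemma \ref{LemRedundant}, the G4 property is redundant given G1, G2, G3, G5. So being a full heap is equivalent to satisfying the four properties G1, G2, G3, and G5. This lets me replace (ii) in the statement with the condition ``$P$ satisfies G1, G2, G3, and G5.''

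Next I would invoke the equivalence of parts (i) and (ii) of Proposition \ref{PropMinHeapEquiv}, which says that a $\Gamma$-colored poset satisfies EC, NA, AC, and ICE2 if and only if it satisfies G1, G2, and G5. Since G3 appears in both conditions of the corollary and is not affected by the proposition, I can simply conjoin G3 to both sides of this equivalence. That is, $P$ satisfies EC, NA, AC, ICE2, and G3 if and only if $P$ satisfies G1, G2, G5, and G3, which by the first step is equivalent to $P$ being a full heap.

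The main ``obstacle'' is really just making sure that local finiteness is handled correctly: Proposition \ref{PropMinHeapEquiv} is stated for $\Gamma$-colored posets (which are assumed locally finite in this paper), and full heaps are also defined as locally finite $\Gamma$-colored posets, so the hypotheses match up without additional work. Thus the proof is essentially a one-paragraph citation of the two previous results applied in sequence.
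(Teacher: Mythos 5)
Your proposal is correct and matches the paper's own proof essentially verbatim: both arguments conjoin G3 to the equivalent property sets in Proposition \ref{PropMinHeapEquiv}(i) and (ii) and then invoke Lemma \ref{LemRedundant} to identify the resulting set G1, G2, G3, G5 with the definition of a full heap. Your extra remark about local finiteness is a harmless confirmation of hypotheses rather than a new step.
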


\begin{proof}
The property G3 can be added to each set of properties in Proposition \ref{PropMinHeapEquiv}(i) and \ref{PropMinHeapEquiv}(ii) to produce two new equivalent sets of properties.  By Lemma \ref{LemRedundant}, the latter set of properties defines a full heap.
\end{proof}

By restricting our attention to finite posets, we now show UCB1 is a reformulation of the axioms S3 and S4 of Stembridge when one or both of the properties NA and AC are assumed.

\begin{proposition}\label{PropFrontierCensusEquivtoS3S4}
Let $P$ be a finite $\Gamma$-colored poset that satisfies NA.
\begin{enumerate}[(a),nosep]
    \item If $P$ satisfies S3 and S4, then it satisfies UCB1.
    \item If $P$ satisfies UCB1 and also AC, then it satisfies S3 and S4.
\end{enumerate}
\end{proposition}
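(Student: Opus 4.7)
For part (a), I would fix a maximal $x \in P_a$ and argue that $U(x,P)$ is either empty or equal to $\{y\}$, where $y$ is the unique cover of $x$ guaranteed by S3. If $x$ has no cover, then local finiteness forces $x$ to be maximal in $P$ (any $y > x$ would yield a saturated chain whose first step covers $x$), so $U(x,P) = \emptyset$ and UCB1 is trivial. Otherwise, S3 gives a unique cover $y$ whose color $c := \kappa(y)$ is 1-adjacent to $a$, and with $y$ maximal in $P_c$. Any $z \in U(x,P)$ satisfies $z \geq y$, since a saturated chain from $x$ to $z$ must pass through the unique cover. It then suffices to show $\kappa(z) = c$, for then $z \in P_c$ with $z \geq y$ combined with maximality of $y$ in $P_c$ forces $z = y$, yielding census $-\theta_{c,a} = 1$.

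The main obstacle in part (a) is ruling out $\kappa(z) = b$ with $b \sim a$ and $b \neq c$. Assuming this, a saturated chain $y = w_0 \to w_1 \to \cdots \to w_m = z$ yields, via NA, a walk in $\Gamma$ from $c$ to $b$ whose intermediate colors are all different from $a$, since each $w_i > x$ and $x$ is maximal in $P_a$. Extracting a simple path in $\Gamma \setminus \{a\}$ and closing it by prepending and appending $a$ produces a cycle $a, c, \ldots, b, a$ of length at least $3$ in the simple graph underlying $\Gamma$ (as $a$, $b$, $c$ are pairwise distinct), contradicting S4. Thus $\kappa(z) = c$ and UCB1 follows.

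For the S3 half of part (b), I would fix a maximal $x \in P_a$ and observe that NA places every cover of $x$ into $U(x,P)$ with individual census contribution at least $1$; UCB1 then immediately forbids two distinct covers. If $y$ is the unique cover with $c := \kappa(y)$, then $y$ alone contributes $-\theta_{c,a}$ to the census, so UCB1 forces $-\theta_{c,a} = 1$, i.e.\ $c$ is 1-adjacent to $a$. Finally, if $y$ failed to be maximal in $P_c$, any $y' > y$ in $P_c$ would join $y$ in $U(x,P)$ and push the census to at least $2$, again contradicting UCB1.

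The S4 half of part (b) is the main obstacle. Suppose $\Gamma$ contains a cycle, and take a minimal cycle $a_1 \sim a_2 \sim \cdots \sim a_n \sim a_1$ of length $n \geq 3$ in the underlying simple graph. I plan to build maximal elements $x_i \in P_{a_i}$ with $x_1 > x_2 > \cdots > x_n$ and then contradict UCB1 at $x_n$. Start with any maximal $x_1 \in P_{a_1}$; since both $a_2$ and $a_n$ are adjacent to $a_1$, the presence of elements of both $P_{a_2}$ and $P_{a_n}$ above $x_1$ would push the census to at least $2$, so without loss of generality no element of $P_{a_2}$ lies above $x_1$, and AC forces all of $P_{a_2}$ strictly below $x_1$, yielding a maximal $x_2 \in P_{a_2}$ with $x_2 < x_1$. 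Inductively, once $x_k < x_{k-1}$ is maximal in $P_{a_k}$, the element $x_{k-1} \in P_{a_{k-1}}$ lies above $x_k$ and already contributes at least $1$ to the census at $x_k$, so UCB1 forbids any element of $P_{a_{k+1}}$ above $x_k$, and AC produces a maximal $x_{k+1} < x_k$ in $P_{a_{k+1}}$. At the terminal stage, both $x_{n-1} \in P_{a_{n-1}}$ and $x_1 \in P_{a_1}$ lie strictly above $x_n$ with distinct colors (by minimality of the cycle) each adjacent to $a_n$, giving census at least $2$ and contradicting UCB1.
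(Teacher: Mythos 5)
Your proof is correct. Part (a) and the S3 half of part (b) follow essentially the paper's route: in (a) both arguments use S3 to force the unique cover to account for the entire census and use acyclicity (S4) to exclude any second adjacent-colored element higher up. The only difference is that the paper applies S3 repeatedly to make the whole filter generated by $x$ a chain with no repeated colors, so the saturated chain reads off a path in $\Gamma$ directly, whereas you apply S3 only once and then extract a simple path in $\Gamma$ avoiding $a$ from the walk induced by a saturated chain above the cover, using only that no element above $x$ can have color $a$; both devices are sound. Where you genuinely diverge is the S4 half of (b). The paper takes the set $M$ of maximal elements of the colors on a fixed cycle $\mathcal{C}$, chooses $x$ minimal in $M$, and uses AC together with that minimality to place the maximal elements of both cycle-neighbors of $\kappa(x)$ above $x$ in a single step, giving census at least $2$. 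You instead walk around the cycle, using UCB1 and AC at each stage to force the maximal elements $x_1 > x_2 > \cdots > x_n$ of the successive cycle colors into a strictly decreasing chain, reaching the contradiction only when the cycle closes up at $x_n$. Your induction is longer but exposes the structural fact that UCB1 drives the maximal elements of adjacent colors strictly downward around a cycle; the paper's extremal choice of $x$ collapses that descent into one step. Both arguments are complete, and all the small supporting facts you rely on (nonemptiness of each $P_{a_i}$ from surjectivity of $\kappa$, existence of maximal elements from finiteness, distinctness of the colors contributing to each census) do hold.
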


\begin{proof}
To show (a), assume for a contradiction that $P$ fails UCB1.  Then there is a color $a \in \Gamma$ and an element $x$ that is maximal in $P_a$ such that $\sum_{y \in U(x,P)} -\theta_{\kappa(y),a} > 1$.  Let $F$ be the filter generated by $x$.  By repeatedly applying S3 if necessary, we see that $F$ is a chain with no repeated colors.  Since $\sum_{y \in U(x,P)} -\theta_{\kappa(y),a} > 1$, the set  $U(x,P)$ is nonempty.  Let $z$ be maximal in $U(x,P)$ and define $b := \kappa(z)$.  If $z$ covers $x$, then it is the only element greater than $x$ with color adjacent to $a$.  This implies that $-\theta_{ba} > 1$, which violates S3.  Thus $z$ does not cover $x$.  By NA and the fact that $F$ has no repeated colors, the saturated chain from $x$ to $z$ induces a path in $\Gamma$ from $a$ to $b$ consisting of at least three distinct colors.  Since $a \sim b$, this shows $\Gamma$ is not acyclic, contradicting S4.  Hence $P$ satisfies UCB1.

To show (b), assume $P$ satisfies UCB1 and AC.  Let $a \in \Gamma$ and let $x$ be maximal in $P_a$.  By UCB1 we know $\sum_{y \in U(x,P)} -\theta_{\kappa(y),a} \le 1$.  
This inequality shows that $|U(x,P)| \le 1$.  It also shows that if $z \in U(x,P)$, then $z$ is the maximal element of its color and $\kappa(z)$ is 1-adjacent to $a$.  By NA, any element covering $x$ must be in $U(x,P)$.
Hence S3 holds.  Now assume for a contradiction that $\Gamma$ is not acyclic.  
Fix a cycle $\mathcal{C}$ in $\Gamma$ and define $M := \{ y \in P \ | \ \text{$y$ is maximal in $P_b$ for some $b \in \mathcal{C}$}\}$.  Choose $x$ minimal in $M$ and set $a := \kappa(x)$.  Then $x$ is maximal in $P_a$.  By AC and the minimality of $x$ in $M$, the two elements in $M$ whose colors are adjacent to $a$ in $\mathcal{C}$ are greater than $x$.  
Hence $\sum_{y \in U(x,P)} -\theta_{\kappa(y),a} > 1$, violating UCB1.  Hence $P$ satisfies S4.
\end{proof}

Propositions \ref{PropMinHeapEquiv} and \ref{PropFrontierCensusEquivtoS3S4} combine to obtain the main result of this section, namely, the equivalence between finite $\Gamma$-colored $d$-complete posets and dominant minuscule heaps.  

\begin{theorem}\label{TheoremMSequivJRS}
Let $P$ be a finite $\Gamma$-colored poset.  Then $P$ is a $\Gamma$-colored $d$-complete poset if and only if it is a dominant minuscule heap.
\end{theorem}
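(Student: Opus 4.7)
The plan is to assemble the theorem directly from the two propositions already established earlier in this section. Proposition \ref{PropMinHeapEquiv} shows that the axioms EC, NA, AC, and ICE2 together are equivalent to Stembridge's axioms S1 and S2. Proposition \ref{PropFrontierCensusEquivtoS3S4} trades the frontier census axiom UCB1 for Stembridge's axioms S3 and S4, in the presence of NA (for one direction) and also AC (for the other). Between these two propositions, all five $\Gamma$-colored $d$-complete axioms are accounted for on one side and all four dominant minuscule heap axioms on the other, so the theorem should reduce to a short bookkeeping argument.

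For the forward direction, I would assume $P$ is a finite $\Gamma$-colored $d$-complete poset, hence satisfies EC, NA, AC, ICE2, and UCB1. The equivalence (i)\,$\Leftrightarrow$\,(iii) in Proposition \ref{PropMinHeapEquiv} delivers S1 and S2 at once. With NA and AC both in hand, Proposition \ref{PropFrontierCensusEquivtoS3S4}(b) then upgrades UCB1 to S3 and S4. So $P$ satisfies all of S1--S4 and, being finite, is a dominant minuscule heap in the sense of Definition \ref{dominantminusculeheap}.

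For the reverse direction, I would assume $P$ is a dominant minuscule heap, so $P$ is finite and satisfies S1, S2, S3, and S4. The same equivalence in Proposition \ref{PropMinHeapEquiv} converts S1 and S2 into EC, NA, AC, and ICE2. Since NA is now available, Proposition \ref{PropFrontierCensusEquivtoS3S4}(a) turns S3 and S4 into UCB1. Thus $P$ satisfies exactly the axioms required by Definition \ref{Gcdc} and is a $\Gamma$-colored $d$-complete poset.

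There is no real obstacle here; all of the nontrivial coloring-property translations were carried out in Propositions \ref{PropMinHeapEquiv} and \ref{PropFrontierCensusEquivtoS3S4}, and the theorem amounts to citing them in the correct order. The only point deserving a moment's care is the finiteness hypothesis needed to invoke Proposition \ref{PropFrontierCensusEquivtoS3S4}: it is supplied directly by the hypothesis on the $\Gamma$-colored $d$-complete side and is built into the definition of a dominant minuscule heap on the other side, so both applications are legitimate.
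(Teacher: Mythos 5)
Your proposal is correct and takes essentially the same route as the paper, which simply states that Propositions \ref{PropMinHeapEquiv} and \ref{PropFrontierCensusEquivtoS3S4} combine to give the theorem; your write-up just spells out that bookkeeping explicitly, including the correct observation that AC is needed to invoke Proposition \ref{PropFrontierCensusEquivtoS3S4}(b) in the forward direction.
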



\section{Frontier censuses for ideals and filters}\label{SectionTwoImportantPropositions}

In this section we obtain the two most important results used in Section \ref{MainProof}, namely, Propositions \ref{PropProctorProof} and \ref{PropLCB2}.  
They respectively provide structural information for a $\Gamma$-colored $d$-complete poset whose color chains $P_b$ for $b \in \Gamma$ are uniformly bounded above or unbounded above.  
We thus view these propositions as complimentary.
Their proofs also have a similar structure (but are order dual to each other) and use the following upper and lower frontier censuses for ideals and filters of $P$.  Recall that for $S \subseteq P$ and $x \in S$ we have $U(x,S) = \{y \in S \ | \ \text{$y > x$ and $\kappa(y) \sim \kappa(x)$}\}$ and $L(x,S) = \{y \in S \ | \ \text{$y < x$ and $\kappa(y) \sim \kappa(x)$}\}$. 

\begin{definition}\label{DefUpperLowerFrontierSum}
Let $P$ be a $\Gamma$-colored poset that satisfies EC.  Let $I$ be an ideal and $F$ a filter of $P$.  Let $b \in \Gamma$.
\begin{enumerate}[(a),nosep]
    \item If $x$ is maximal in $P_b \cap I$ and $U(x,I)$ is finite, then define $U_b(I) := \sum_{y \in U(x,I)} -\theta_{\kappa(y),b}$.
\item If $x$ is minimal in $P_b \cap F$ and $L(x,F)$ is finite, then define $L_b(F) := \sum_{y \in L(x,F)} -\theta_{\kappa(y),b}$.
\end{enumerate}
We call $U_b(I)$ (respectively $L_b(F)$) the \emph{upper} (respectively \emph{lower}) \emph{frontier census} of $I$ (respectively $F$) for $b$.
\end{definition}


We note that if $P$ satisfies UCB1 and $U_b(P)$ is defined for some $b \in \Gamma$, then $U_b(P) \le 1$.  The following are the key properties of upper and lower frontier censuses used in the proofs of Propositions \ref{PropProctorProof} and \ref{PropLCB2}. 

\begin{lemma}\label{LemUpperFrontier}
Let $P$ be a $\Gamma$-colored $d$-complete poset. Let $I$ be an ideal of $P$ and let $F := P - I$ be its corresponding filter.  Let $a \in \Gamma$.
\begin{enumerate}[(a),nosep]
\item Suppose that $U_a(I)$ is defined.
    \begin{enumerate}[(i),nosep]
    \item We have $U_a(I) \in \{0,1,2\}$.
    \item We have $U_a(I) = 2$ if and only if $F$ contains a minimal element of color $a$.
    \end{enumerate}
\item Suppose that $I$ contains an element of color $a$ and $L_a(F)$ is defined.
\begin{enumerate}[(i),nosep]
\item We have $L_a(F) \in \{0,1,2\}$.
\item We have $L_a(F) = 2$ if and only if $I$ contains a maximal element of color $a$.
\end{enumerate}
\end{enumerate}
\end{lemma}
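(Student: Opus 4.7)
The plan is to prove (a) and (b) by the same pattern: EC makes $P_a$ a chain, and the hypotheses of the two parts guarantee the existence of a pair of consecutive color-$a$ elements straddling the boundary between $I$ and $F$; applying ICE2 on the open interval between them controls the sum defining $U_a(I)$ or $L_a(F)$, while an edge case is handled by UCB1.

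For part (a), let $x$ be the given maximal element of $P_a \cap I$. If $x$ is also maximal in $P_a$, then EC forces $P_a \subseteq I$, so $F$ contains no element of color $a$; the right-hand side of (ii) is then vacuously false, while $U(x,I) \subseteq U(x,P)$ and UCB1 give $U_a(I) \le 1$, which settles both (i) and (ii). Otherwise local finiteness produces the minimum $x'$ of $P_a \cap F$, and $x < x'$ are consecutive in $P_a$. The heart of the argument is to show that every $y \in U(x,I)$ lies in the open interval $(x,x')$: $\kappa(y) \sim a = \kappa(x')$ makes $y$ and $x'$ comparable by AC, and $y > x'$ would place $y$ in the filter $F$, contradicting $y \in I$. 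Partitioning $(x,x')$ into its parts in $I$ and in $F$ and applying ICE2 then yields $U_a(I) + \sum_{z \in (x,x') \cap F} -\theta_{\kappa(z),a} = 2$, which establishes (i). For (ii), the $F$-summand vanishes iff no element of $F \cap (x,x')$ has color adjacent to $a$; one direction of its equivalence with ``$x'$ is minimal in $F$'' is immediate, and the other follows from taking a saturated chain inside $F$ down to $x'$ and using NA together with the AC argument above to place an element covered by $x'$ inside $F \cap (x,x')$. Finally, $x'$ is the unique candidate in $P_a \cap F$, so $x'$ being minimal in $F$ is the same as $F$ containing a minimal-in-$F$ element of color $a$.

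Part (b) is the order-dual of part (a), with the hypothesis that $I$ contains a color-$a$ element replacing the case split: local finiteness produces $x' := \max(P_a \cap I)$, consecutive to the given $x$ in $P_a$. The dual AC/ideal argument confines $L(x,F)$ inside $(x',x)$, and ICE2 on $(x',x)$ gives the dual identity $L_a(F) + \sum_{z \in (x',x) \cap I} -\theta_{\kappa(z),a} = 2$, which yields (b)(i). Dualizing the reasoning above for (b)(ii) shows the $I$-summand vanishes iff $x'$ is maximal in $I$, equivalently iff $I$ contains a maximal-in-$I$ element of color $a$. The main obstacle in both parts is the careful use of AC together with the ideal/filter property to confine $U(x,I)$ and $L(x,F)$ to the relevant consecutive-color-$a$ interval; once that containment is established, ICE2 supplies the numerical identity and the rest is bookkeeping.
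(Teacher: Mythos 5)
Your proof is correct and follows essentially the same route as the paper's: EC makes $P_a$ a chain, the case where $P_a\cap F$ is empty is handled by UCB1, and otherwise the consecutive pair straddling the $I$/$F$ boundary is located, $U(x,I)$ is confined to the open interval via AC and the filter property, and ICE2 supplies the identity $U_a(I)+L_a(F)=2$, with NA giving the minimality criterion in (ii) and part (b) obtained by duality using the extra hypothesis in place of the case split. The only cosmetic difference is that the paper names your $F$-summand as $L_a(F)$ and records explicitly that it is defined.
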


\begin{proof}
We first prove (a).  By EC the set $P_a$ is a chain.  Since $U_a(I)$ is defined, the set $P_a \cap I$ must have a maximal element; call this element $x$.  Note that $U_a(I)$ is a non-negative integer.  If $P_a \cap F$ is empty,  then $x$ is maximal in $P_a$ and UCB1 implies $U_a(I) \le 1$.  So for both (i) and (ii), we now suppose $P_a \cap F$ is nonempty.  

By local finiteness, the set $P_a \cap F$ must have a minimal element $y$.  Then $x < y$ are consecutive elements of the color $a$.  By AC all elements in $U(x,I)$ or $L(y,F)$ are in $(x,y)$.  Hence $L_a(F)$ is defined, and ICE2 gives $U_a(I) + L_a(F) = 2$.  Since $L_a(F)$ is a non-negative integer, this proves (i).  By NA we see that $L_a(F) = 0$ if and only if there are no elements in $F$ covered by $y$; that is, if and only if $y$ is minimal in $F$.  This proves (ii).  

For (b), since $L_a(F)$ is defined, the set $P_a \cap F$ must have a minimal element.  We are assuming that $P_a \cap I$ is nonempty.  Hence a dualized version of the above paragraph proves (b).
\end{proof}

\begin{remark}\label{RemUpperandLowerSumAdditionalElement}
Suppose we are in the situation of Lemma \ref{LemUpperFrontier}(a) and $U_a(I) = 2$.  Then $F$ contains a minimal element $y$ of color $a$.  Set $I' := I \cup \{y\}$.  Fix $b \in \Gamma$ and suppose $U_b(I)$ is defined.  Then $U_b(I')$ is also defined and $U_b(I') = U_b(I) - \theta_{ab}$.  To see this, note that if $a = b$ then $\theta_{ab} = 2$ and so $U_b(I') = 0 = U_b(I) - \theta_{ab}$. 
If $a \sim b$, then $y \in U(x,I')$ by AC and hence $U(x,I') = U(x,I) \cup \{y\}$.  Thus
\begin{align*} 
U_b(I') = \sum_{z \in U(x,I')} -\theta_{\kappa(z),b} = \sum_{z \in U(x,I)} -\theta_{\kappa(z),b} - \theta_{\kappa(y),b} = U_b(I) - \theta_{ab}.
\end{align*}
If $a$ and $b$ are distant, then $U(x,I') = U(x,I)$ and so $U_b(I') = U_b(I) = U_b(I) - \theta_{ab}$.

Now suppose we are in the situation of Lemma \ref{LemUpperFrontier}(b) and $L_a(F) = 2$.  Then $I$ contains a maximal element $y$ of color $a$.  Set $F' := F \cup \{y\}$.  A dualized argument shows $L_b(F') = L_b(F) - \theta_{ab}$ for every $b \in \Gamma$.
\end{remark}

The first main result of this section shows there are no infinite $\Gamma$-colored $d$-complete posets whose color chains $P_b$ for $b \in \Gamma$ are uniformly bounded above.  

\begin{proposition}\label{PropProctorProof}
Suppose $P$ is a $\Gamma$-colored $d$-complete poset and for every $b \in \Gamma$, the set $P_b$ is bounded above.  Then $P$ is finite.
\end{proposition}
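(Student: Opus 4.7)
The plan is to argue by contradiction, first reducing the problem to a structural statement about the filter generated by the color maxima, then invoking the frontier census machinery.

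The first step is to show that for every $x \in P$, the upper filter $F(x) := \{y \in P : y \ge x\}$ is finite. For each color $b$, nonemptiness of $F(x) \cap P_b$ forces $x \le m_b$ by transitivity, so $F(x) \cap P_b \subseteq [x, m_b]$, a finite interval by local finiteness. Since $\Gamma$ is finite, $F(x)$ is a finite union of finite sets. Consequently, the filter $F_M := \bigcup_{b \in \Gamma} F(m_b)$ generated by the color maxima $M := \{m_b : b \in \Gamma\}$ is finite.

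Next I would show that $F_M = M$. The argument verifying S3 and S4 in Proposition \ref{PropFrontierCensusEquivtoS3S4}(b) uses only UCB1, NA, AC, and the existence of color maxima within the relevant finite subset of $\Gamma$, so it goes through verbatim in our setting: each color maximum $m_b$ is covered by at most one element, which must be some $m_c$ with $c$ 1-adjacent to $b$, and moreover $\Gamma$ is acyclic. Inducting upward along the resulting forest structure inside the finite set $F(m_b)$ then gives $F(m_b) \subseteq M$ for every $b$, so $F_M = M$.

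Now suppose for contradiction $P \ne M$, and let $x$ be maximal in the nonempty ideal $I := P \setminus M$; such a maximal element exists because every $F(y)$ is finite. Set $a := \kappa(x)$. Then $x$ is the maximum of $P_a \cap I$, hence the immediate predecessor of $m_a$ in the chain $P_a$, and $U_a(I) = 0$ since $x$ is maximal in $I$. The interval identity $U_a(I) + L_a(F_M) = 2$ derived within the proof of Lemma \ref{LemUpperFrontier} yields $L_a(F_M) = 2$. Applying Remark \ref{RemUpperandLowerSumAdditionalElement}, the filter $F' := F_M \cup \{x\}$ satisfies $L_b(F') = L_b(F_M) + |\theta_{ab}|$ for every $b \sim a$, and Lemma \ref{LemUpperFrontier} applied to $F'$ forces $L_b(F') \le 2$. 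Iterating this extension produces a strictly increasing sequence of finite filters whose census vectors evolve in the finite state space $\{0,1,2\}^{\Gamma}$.

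The main obstacle is completing this contradiction: one must show the iterated extension cannot continue indefinitely. I expect the leverage comes from pairing the UCB1 saturation $\sum_{c \in N^+(b)} |\theta_{cb}| \le 1$ at every color maximum with ICE2 applied inside each chain $P_b$ beneath $m_b$; these together over-constrain how the census vector may evolve, forcing termination of the extension process, hence $I = \emptyset$ and $P = M$, which is finite.
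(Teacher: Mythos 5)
Your preparatory steps are sound and closely parallel the paper's own setup: the finiteness of every principal filter $F(x)$ (the paper gets this the same way, from local finiteness plus each $P_b$ being a bounded chain), the split identity $U_a(I)+L_a(F)=2$ across consecutive elements of a color, and the update rule of Remark \ref{RemUpperandLowerSumAdditionalElement}. The detour through $F_M=M$ is correct but does no work for you. The peeling process you then describe---repeatedly adjoining a maximal element of the complementary ideal to a finite filter and tracking census vectors in $\{0,1,2\}^{\Gamma}$---is exactly the paper's construction as well.

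The gap is precisely where you flag it, and it is the entire engine of the proof rather than a finishing detail. Your hoped-for mechanism (``forcing termination of the extension process'') has the wrong shape: if $P$ is infinite, the peeling process simply runs forever, and a census vector cycling through a finite state space is not by itself contradictory---nothing in UCB1 or ICE2 obstructs indefinite cycling. The paper's contradiction comes from a counting argument you have not identified. Peel maximal elements $x_1,x_2,\dots$ to form filters $F_k=\{x_1,\dots,x_k\}$ and ideals $I_k=P-F_k$, and record $\mathbf{U}_k=(U_{a_1}(I_k),\dots,U_{a_m}(I_k))$. By pigeonhole choose $q<r$ with $\mathbf{U}_q=\mathbf{U}_r$ and $r-q$ large. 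The crucial observation, from Lemma \ref{LemUpperFrontier}(a)(ii), is that $U_{\kappa(x_k)}(I_k)=2$ for each $k$, since $x_k$ is minimal in $F_k$. Hence $\mathbf{U}_q=\mathbf{U}_r$ certifies that the filter above $I_q$ has a forced minimal element of color $\kappa(x_r)$; adjoining it and applying Remark \ref{RemUpperandLowerSumAdditionalElement} updates the census vector to $\mathbf{U}_{r-1}$, which forces a minimal element of color $\kappa(x_{r-1})$; and so on. This replays $q+1$ forced extractions of colors $\kappa(x_r),\dots,\kappa(x_{r-q})$ inside the filter $F_q$, which contains only $q$ elements---that is the contradiction. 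Without this replay-and-count step, your argument does not close.
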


\begin{proof}
Suppose for a contradiction that  $P$ is infinite.  Define $\Gamma'$ to be the set of colors for which infinitely many elements of that color appear in $P$.  Since $\Gamma$ is finite but $P$ is infinite, we see that $\Gamma' \ne \emptyset$.

Let $F$ be the filter of $P$ generated by the minimal elements of $P_a$ for every $a \in \Gamma - \Gamma'$.  Since $P_b$ is bounded above for every $b \in \Gamma$ and $P$ is locally finite, the filter $F$ is finite.  Note that $F$ is empty if $\Gamma' = \Gamma$.  Set $p := |F|$ and note that $I := P - F$ is an infinite ideal of $P$ colored by $\Gamma'$.  Since $P_b \cap I$ is bounded above for all $b \in \Gamma'$, the ideal $I$ has a maximal element which we call $x_{p+1}$.  Similarly, the ideal $I - \{x_{p+1}\}$ has a maximal element $x_{p+2}$.  Iterate this reasoning to produce an infinite sequence $x_{p+1}, x_{p+2},\dots$ of elements.  If $F$ is nonempty, create a linear extension $x_1 \to \cdots \to x_p$ of the order dual $F^*$.  Whether or not $F$ is empty, this produces an infinite sequence $x_1, x_2, \dots$ in which $F_k := \{x_1,\dots,x_k\}$ is a filter of $P$ with minimal element $x_k$ for every $k \ge 1$.  We also have $\kappa(x_j) \in \Gamma'$ for all $j > p$.

For every $k \ge 1$ define the ideal $I_k := P - F_k$.  Then $P_a \cap I_k$ is empty or bounded above for all $a \in \Gamma$.  If $b \in \Gamma'$, then $P_b \cap I_k$ is nonempty and contains a maximal element; call this element $x$.  Then by local finiteness the set $U(x,I_k)$ is finite.  Hence $U_b(I_k)$ is defined for all $b \in \Gamma'$ and all $k \ge 1$.

Set $m := |\Gamma'|$ and fix an ordering $a_1,a_2,\dots,a_m$ of the colors in $\Gamma'$.  For every $k \ge 1$ form the $m$-tuple $\textbf{U}_k := (U_{a_1}(I_k),U_{a_2}(I_k),\dots,U_{a_m}(I_k))$.  By Lemma \ref{LemUpperFrontier}(a)(i) we see $\textbf{U}_k \subseteq \{0,1,2\}^m$, so there are $3^{m}$ possibilities for $\textbf{U}_k$.  Hence we may fix $q > 1$ such that $\textbf{U}_q = \textbf{U}_j$ for infinitely many $j \ge 1$.

Now fix some $r > p+q$ such that $\textbf{U}_q = \textbf{U}_r$.  Since $r - q > p$, we see $\{x_r,\dots,x_{r-q}\} \subseteq I_p = I$.  Hence the elements $x_r,\dots,x_{r-q}$ all have colors in $\Gamma'$.  These elements are respectively minimal in the filters $F_r,\dots,F_{r-q}$ corresponding to the ideals $I_r,\dots,I_{r-q}$.  Thus $U_{\kappa(x_r)}(I_r) = \cdots = U_{\kappa(x_{r-p})}(I_{r-q}) = 2$ by Lemma \ref{LemUpperFrontier}(a)(ii).

We construct a new sequence of ideals of $P$ starting with the ideal $K_q := I_q$.  Let $J_q := P - K_q$ be its corresponding filter.  Since $\kappa(x_r) \in \Gamma'$ and $(U_{a_1}(K_q),\dots,U_{a_m}(K_q)) = \textbf{U}_q = \textbf{U}_r$, we know $U_{\kappa(x_r)}(K_q) = 2$.  By Lemma \ref{LemUpperFrontier}(a)(ii) the filter $J_q$ has a minimal element $y_q$ of color $\kappa(x_r)$.  Define the ideal $K_{q-1} := K_q \cup \{y_q\}$ and let $J_{q-1} := P - K_{q-1}$ be its corresponding filter.  Note by Remark \ref{RemUpperandLowerSumAdditionalElement} that for every $b \in \Gamma'$ the upper frontier census $U_b(K_{q-1})$ is defined and $U_b(K_{q-1}) = U_b(K_q) - \theta_{\kappa(y_q),b} = U_b(I_r) - \theta_{\kappa(x_r),b} = U_b(I_{r-1})$.  Hence $(U_{a_1}(K_{q-1}),\dots,U_{a_m}(K_{q-1})) = \textbf{U}_{r-1}$.  Since $\kappa(x_{r-1}) \in \Gamma'$ we see $U_{\kappa(x_{r-1})}(K_{q-1}) = U_{\kappa(x_{r-1})}(I_{r-1}) = 2$.  Hence the filter $J_{q-1}$ has a minimal element $y_{q-1}$ of color $\kappa(x_{r-1})$ by Lemma \ref{LemUpperFrontier}(a)(ii).  Define the ideal $K_{q-2} := K_{q-1} \cup \{y_{q-1}\}$ and its corresponding filter $J_{q-2} := P-K_{q-2}$.

This reasoning and construction can be repeated to create a sequence of ideals $K_q \subseteq \cdots \subseteq K_0$ where $|K_j - K_{j+1}| = 1$ for every $0 \le j \le q-1$ and $(U_{a_1}(K_0),\dots,U_{a_m}(K_0)) = \textbf{U}_{r-q}$.  Since $\kappa(x_{r-q}) \in \Gamma'$, we also have $U_{\kappa(x_{r-q})}(K_0) = U_{\kappa(x_{r-q})}(I_{r-q}) = 2$.  This implies by Lemma \ref{LemUpperFrontier}(a)(ii) that the filter $J_0 := P - K_0$ contains a minimal element of color $\kappa(x_{r-q})$.  However, since there were only $q$ elements in $J_q = F_q$, we must have $K_0 = P$ and so the filter $J_0$ is empty.
\end{proof}

To prove Proposition \ref{PropLCB2}, we need Part (b) of the following lemma.

\begin{lemma}\label{LemPConnectedImpliesGammaConnected}
Let $P$ be a locally finite poset.
\begin{enumerate}[(a),nosep]
    \item If $P$ is connected, then any two elements can be joined by a finite path in the Hasse diagram of $P$.
    \item Suppose $P$ is a $\Gamma$-colored poset that satisfies NA.  If $P$ is connected, then $\Gamma$ is connected.
\end{enumerate}
\end{lemma}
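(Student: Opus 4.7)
The plan is to establish (a) by relating poset connectedness to path-connectedness in the Hasse diagram via local finiteness, and then derive (b) as a quick consequence using NA and surjectivity of $\kappa$.

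For part (a), I would define a relation on $P$ by setting $x \sim y$ when there is a finite sequence $x = x_0, x_1, \dots, x_n = y$ in which each consecutive pair $\{x_i,x_{i+1}\}$ is a covering pair (in either direction). This is immediately reflexive, symmetric, and transitive, so it partitions $P$ into equivalence classes $P_1, P_2, \dots$. The key observation is that comparable elements are always in the same class: if $x \le y$ in $P$, then by local finiteness the interval $[x,y]$ is finite, and any linear extension of $[x,y]$ refines to a saturated chain $x = z_0 \to z_1 \to \cdots \to z_m = y$, which is a Hasse-diagram path between $x$ and $y$. Consequently each class $P_i$ is closed under the partial order (so it is an honest subposet), and whenever $x \in P_i$, $y \in P_j$ with $i \ne j$, the elements $x$ and $y$ are incomparable. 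Therefore $P$ is the disjoint union (in the sense of Section \ref{SectionDefinitions}) of the subposets $P_i$. Since $P$ is connected by hypothesis, there can be only one class, so any two elements of $P$ are joined by a finite Hasse path.

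For part (b), pick any two colors $a,b \in \Gamma$. Since $\kappa$ is surjective, choose $x \in P_a$ and $y \in P_b$. By part (a), there is a finite Hasse-diagram path $x = x_0, x_1, \dots, x_n = y$ in $P$. Each consecutive pair $\{x_{i-1}, x_i\}$ consists of neighbors, so by NA the colors $\kappa(x_{i-1})$ and $\kappa(x_i)$ are either equal or adjacent in $\Gamma$. Deleting repeats produces a walk in the adjacency graph of $\Gamma$ from $a$ to $b$, establishing that $\Gamma$ is connected.

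The argument is essentially routine; the only subtle point is the verification that the Hasse-path equivalence classes genuinely realize $P$ as a disjoint union in the precise sense defined in Section \ref{SectionDefinitions}, which is where local finiteness is needed (to convert a comparability $x \le y$ into an actual chain of covering relations). No further technical obstacle arises.
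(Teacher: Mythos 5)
Your proposal is correct and follows essentially the same route as the paper: both arguments hinge on the observation that local finiteness converts a comparability $u \le v$ into a finite saturated chain (hence a Hasse-diagram path), so that the Hasse-path components are closed under the order and realize $P$ as a disjoint union, which connectedness forces to be trivial; part (b) is then the same color-transport argument along a Hasse path using NA. The only cosmetic difference is that the paper argues the contrapositive with a two-block split $Q$, $P-Q$ rather than exhibiting the full equivalence-class partition.
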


\begin{proof}
Suppose there is no finite path between elements $x$ and $y$ in the Hasse diagram of $P$.  Let $Q$ be the set of elements that can be connected to $x$ via a finite path in the Hasse diagram, and let $R := P - Q$.  
Since $Q$ and $R$ respectively contain $x$ and $y$, they are nonempty.
Let $u \le v$ in $P$.  Since $P$ is locally finite, there is a finite path from $u$ to $v$ in $P$.  Since two finite paths concatenate into a finite path, we see $u$ is in $Q$ if and only if $v$ is in $Q$.  Hence $P$ is the disjoint union of its subposets $Q$ and $R$, so $P$ is not connected.  This proves (a).

For (b), let $a,b \in \Gamma$.  Since $\kappa$ is surjective, there are elements $x$ and $y$ in $P$ with respective colors $a$ and $b$.  Use (a) to produce a finite path in the Hasse diagram of $P$ from $x$ to $y$.  This path induces a path in $\Gamma$ from $a$ to $b$ by NA.  Thus $\Gamma$ is connected and (b) holds.
\end{proof}

The second main result of this section obtains a lower frontier census bound for connected $\Gamma$-colored $d$-complete posets when the color chains are uniformly unbounded above.

\begin{proposition}\label{PropLCB2}
Suppose that $P$ is a connected $\Gamma$-colored $d$-complete poset and for every $b \in \Gamma$, the set $P_b$ is unbounded above.  Then $P$ satisfies LCB2.
\end{proposition}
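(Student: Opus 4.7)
The plan is to mirror the pigeonhole strategy of Proposition \ref{PropProctorProof}, but dualized to operate on a decreasing sequence of filters via the lower frontier censuses $L_b$ rather than an increasing sequence of ideals via the upper censuses $U_b$. First, the finiteness portion of LCB2 is automatic: for any $x$ minimal in $P_a$ and any $b \sim a$, EC makes $P_b$ a chain, so $P_b \cap \{y : y < x\}$ is an initial segment of that chain; if it were infinite, the interval in $P$ from its least element up to $x$ would be infinite, violating local finiteness. So only the census bound is at stake, and I assume for contradiction that $c := \sum_{y \in L(x,P)} -\theta_{\kappa(y),a} \ge 3$ for some $x$ minimal in $P_a$.

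Next, I would enumerate the elements of $P_a$ as $x = x_0 < x_1 < x_2 < \cdots$ (infinite by the unbounded-above hypothesis and local finiteness) and set $F_k := P \setminus \{z \in P : z \le x_k\}$ with complementary ideal $I_k$. A short computation shows $P_a \cap F_k$ has minimum $x_{k+1}$, and applying AC together with ICE2 to the consecutive pair $x_k, x_{k+1}$ gives $L_a(F_k) = 2$ for every $k \ge 0$. More generally, Lemma \ref{LemUpperFrontier}(b)(i) forces $L_b(F_k) \in \{0,1,2\}$ whenever $I_k$ meets $P_b$ and $P_b \cap F_k$ has a minimum; using the connectedness of $\Gamma$ from Lemma \ref{LemPConnectedImpliesGammaConnected}(b) together with AC, I would argue that this applies to every $b \in \Gamma$ once $k$ is large enough. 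The tuple $\mathbf{L}_k := (L_b(F_k))_{b \in \Gamma} \in \{0,1,2\}^{|\Gamma|}$ then takes only finitely many values, and pigeonhole gives indices $q < r$ with $\mathbf{L}_q = \mathbf{L}_r$.

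I would then dualize the reconstruction step of Proposition \ref{PropProctorProof}: starting from $F_r$, iteratively adjoin to the filter a maximum of the current complementary ideal whose color $c$ has $L_c = 2$ (guaranteed by Lemma \ref{LemUpperFrontier}(b)(ii)), updating the $\mathbf{L}$-tuple at each step via the dualized form of Remark \ref{RemUpperandLowerSumAdditionalElement}. After $r-q$ single-element adjunctions this builds a filter chain $F_r = K_{r-q} \subsetneq K_{r-q-1} \subsetneq \cdots \subsetneq K_0$ whose tuples recover $\mathbf{L}_r, \ldots, \mathbf{L}_q$; cardinality matching forces $K_0 = F_q$. The cyclic identity $\mathbf{L}_q = \mathbf{L}_r$ then allows the process to be continued past $F_q$ indefinitely, sweeping toward the hypothetical filter $F_{-1} := P$ with $L_a(P) = c$.

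The main obstacle and the heart of the proof is the final bookkeeping: showing that the census excess $c - 2 \ge 1$ at $L(x,P)$ — which represents the gap between $L_a(F_0) = 2$ and $L_a(P) = c$ — cannot be absorbed by any cyclic pigeonhole configuration. Concretely, the continuation of the reconstruction past $F_q$ toward $P$ would eventually demand an additional maximum of color $a$ below $x$ via Lemma \ref{LemUpperFrontier}(b)(ii), but $x$ is minimal in $P_a$ so no such element exists. Rendering this obstruction precise against the periodic tuple evolution, paralleling the cardinality conclusion $|K_0 - K_q| = q$ at the end of Proposition \ref{PropProctorProof}, is the most technical step.
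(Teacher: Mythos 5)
Your skeleton is the right one --- dualize the pigeonhole-plus-reconstruction argument of Proposition \ref{PropProctorProof} using the lower censuses, Lemma \ref{LemUpperFrontier}(b), and Remark \ref{RemUpperandLowerSumAdditionalElement} --- but two of your steps fail as stated. First, finiteness of $L(x,P)$ is not automatic: an infinite initial segment of the chain $P_b$ below $x$ need not have a least element (local finiteness only forbids infinite \emph{intervals}), so your appeal to ``the interval from its least element up to $x$'' assumes what it should prove. The paper instead extracts a finite subset $S \subseteq L(x,P)$ with census exceeding $2$ (possible whether or not $L(x,P)$ is finite) and takes $F_0$ to be the filter generated by $S$; because $F_0$ is generated by a finite set, every descending chain in it sits above some element of $S$, which is what guarantees that all the lower censuses $L_b(F_k)$ are actually defined. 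Your terminal object $F_{-1} := P$ enjoys no such guarantee, since some $P_b$ may be unbounded below, and $L_a(P)$ is precisely the quantity whose finiteness is at issue.

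Second, and more seriously, your filters $F_k = P \setminus \{z : z \le x_k\}$ indexed by the elements of $P_a$ drop many elements at each step, so consecutive tuples $\textbf{L}_k$ and $\textbf{L}_{k+1}$ are \emph{not} related by a single application of Remark \ref{RemUpperandLowerSumAdditionalElement}; the reconstruction by single-element adjunctions therefore cannot recover $\textbf{L}_{r-1},\dots,\textbf{L}_q$ in $r-q$ steps, and the ``cardinality matching'' you invoke breaks down. The paper peels single minimal elements off $F_0$ one at a time, so that $F_{k-1} = F_k \cup \{x_k\}$, the colors $\kappa(x_k)$ drive the reconstruction through Lemma \ref{LemUpperFrontier}(b)(ii), and a preliminary argument (ICE2 on the chains $P_c \cup P_d$ plus connectedness of $\Gamma$) shows every color occurs infinitely often among the peeled elements, which keeps the tuples in $\{0,1,2\}^{|\Gamma|}$ and keeps Lemma \ref{LemUpperFrontier}(b) applicable throughout. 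With that setup the endgame you defer as ``the most technical step'' is immediate and requires no continuation past $F_q$: after $q$ adjunctions one lands on a filter whose tuple equals $\textbf{L}_0$, whose $a$-entry exceeds $2$ by the choice of $S$, while Lemma \ref{LemUpperFrontier}(b)(i) forces every entry into $\{0,1,2\}$ because the complementary ideal still contains an element of each color. As written, your proposal does not close the argument.
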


\begin{proof}
Suppose for a contradiction that $P$ fails LCB2.  Then there is a color $a \in \Gamma$, an element $x$ minimal in $P_a$, and a finite set $S \subseteq L(x,P)$ with $\sum_{z \in S} -\theta_{\kappa(z),a} > 2$.  Let $F_0$ be the filter generated by the elements in $S$ and let $I_0 := P-F_0$ be its corresponding ideal.  Note that $L_a(F_0)$ is defined and $L_a(F_0) > 2$.  The filter $F_0$ is infinite since it contains the infinite set $P_a$, and it has a minimal element $x_1$.  Since $P_b \cap (F_0-\{x_1\})$ is empty or bounded below for all $b \in \Gamma$ by local finiteness, the filter $F_0 - \{x_1\}$ has a minimal element $x_2$.  Iterate this reasoning to produce an infinite sequence $x_1, x_2,\dots$ in which $F_k := F_0 - \{x_1,\dots,x_k\}$ is a filter of $P$ and $x_k$ is maximal in the corresponding ideal $I_k := P - F_k$ for all $k \ge 1$.

Since the sequence $x_1, x_2,\dots$ is infinite but $\Gamma$ is finite, there is at least one color $c \in \Gamma$ such that there are infinitely many positive integers $j$ with $\kappa(x_j) = c$.  Suppose $d \in \Gamma$ is adjacent to $c$.  By EC and AC the set $P_c \cup P_d$ is a chain.  Since both $P_c$ and $P_d$ are unbounded above, by local finiteness every element of color $c$ (respectively $d$) must have an element of color $d$ (respectively $c$) above it.  Thus no more than two elements of color $c$ can appear consecutively moving up in $P_c \cup P_d$ without an element of color $d$ appearing by ICE2.  This statement also holds for $x_1,x_2,\dots$ since elements moving up in $P_c \cup P_d$ cannot be skipped over when constructing the sequence.  Hence there are infinitely many positive integers $j$ with $\kappa(x_j) = d$.  Since $\Gamma$ is connected by Lemma \ref{LemPConnectedImpliesGammaConnected}(b), it follows that there are infinitely many elements of each color in $x_1, x_2, \dots$; thus $k \ge 0$ can always be chosen large enough so that $I_k$ contains arbitrarily many elements of each color.  


For every $b \in \Gamma$ and $k \ge 0$, the elements $x_{k+1},x_{k+2},\dots$ are in $F_k$ and contain infinitely many elements of color $b$.  Hence the set $P_b \cap F_k$ is nonempty and contains a minimal element.  If $y$ is this element, then $L(y,F_k)$ is finite by local finiteness.  Hence $L_b(F_k)$ is defined for every $b \in \Gamma$ and $k \ge 0$.  

Set $m := |\Gamma|$ and fix an ordering $a_1,a_2,\dots,a_m$ of the colors in $\Gamma$.  For every $k \ge 0$ form the $m$-tuple $\textbf{L}_k := (L_{a_1}(F_k),L_{a_2}(F_k),\dots,L_{a_m}(F_k))$.  Fix $p > 0$ such that $I_p$ contains at least one element of each color.  Note by Lemma \ref{LemUpperFrontier}(b)(i) that $L_b(F_k) \in \{0,1,2\}$ for every $b \in \Gamma$ and $k \ge p$.  Thus there are $3^m$ possibilities for $\textbf{L}_k$ when $k \ge p$, so we may fix $q > p$ such that $\textbf{L}_q = \textbf{L}_j$ for infinitely many $j \ge 0$.  The elements $x_1,\dots,x_q$ are respectively maximal in the ideals $I_1,\dots,I_q$ corresponding to the filters $F_1,\dots,F_q$.  Thus $L_{\kappa(x_1)}(F_1) = \cdots = L_{\kappa(x_q)}(F_q) = 2$ by Lemma \ref{LemUpperFrontier}(b)(ii).


Now fix some $r > q$ such that $\textbf{L}_r = \textbf{L}_q$ and such that $I_r$ contains at least $q+1$ elements of each color.  We construct a new sequence of filters of $P$ starting with the filter $J_r := F_r$.  Let $K_r := P - J_r$ be its corresponding ideal.  Since $(L_{a_1}(J_r),\dots,L_{a_m}(J_r)) = \textbf{L}_r = \textbf{L}_q$, we know that $L_{\kappa(x_q)}(J_r) = 2$.  By Lemma \ref{LemUpperFrontier}(b)(ii) the ideal $K_r$ has a maximal element $y_r$ of color $\kappa(x_q)$.  Define the filter $J_{r-1} := J_r \cup \{y_r\}$ and let $K_{r-1} := P - J_{r-1}$ be its corresponding ideal.  Note by Remark \ref{RemUpperandLowerSumAdditionalElement} that for all $b \in \Gamma$ the lower frontier census $L_b(J_{r-1})$ is defined and $L_b(J_{r-1}) = L_b(J_r) - \theta_{\kappa(y_r),b} = L_b(F_q) - \theta_{\kappa(x_q),b} = L_b(F_{q-1})$.  Hence $(L_{a_1}(J_{r-1}),\dots,L_{a_m}(J_{r-1})) = \textbf{L}_{q-1}$.  Since $K_{r-1}$ contains at least $q$ elements of each color and $L_{\kappa(x_{q-1})}(J_{r-1}) = L_{\kappa(x_{q-1})}(F_{q-1}) = 2$, the ideal $K_{r-1}$ has a maximal element $y_{r-1}$ of color $\kappa(x_{q-1})$ by Lemma \ref{LemUpperFrontier}(b)(ii).  Define the filter $J_{r-2} := J_{r-1} \cup \{y_{r-1}\}$ and its corresponding ideal $K_{r-2} := P - J_{r-2}$.

This reasoning and construction can be repeated to create a sequence of filters $J_r \subseteq \cdots \subseteq J_{r-q}$ where $|J_{r-l} - J_{r-l+1}| = 1$ for every $1 \le l \le q$ and $(L_{a_1}(J_{r-q}),\dots,L_{a_m}(J_{r-q})) = \textbf{L}_0$.  We are always permitted to use Lemma \ref{LemUpperFrontier}(b)(ii) in this construction since the ideal $K_{r-l} = P - J_{r-l}$ contains at least $q + 1 - l$ elements of each color for $1 \le l \le q$.  
Since $K_{r-q}$ contains at least one element of each color, Lemma \ref{LemUpperFrontier}(b)(i) shows $L_b(J_{r-q}) \in \{0,1,2\}$ for all $b \in \Gamma$.  This contradicts $L_a(F_0) > 2$.
\end{proof}


\section{Connected infinite \texorpdfstring{$\Gamma$}{Gamma}-colored \texorpdfstring{$d$}{d}-complete posets}\label{MainProof}

Theorem \ref{TheoremClassifyInfinite} is our main result of this section, which shows that connected infinite $\Gamma$-colored $d$-complete posets are precisely filters of connected full heaps.  We show this by examining cases resulting from Lemma \ref{LemBoundedUniformly}.
In the most involved case, we apply the results of Section \ref{SectionTwoImportantPropositions} to construct a full heap containing a given $\Gamma$-colored $d$-complete poset as a filter.

In this section, we refer to the following properties $P$ may satisfy:
\begin{itemize}[nosep]
    \item [] (CBA) For every $b \in \Gamma$, the color set $P_b$ is bounded above.
    \item [] (CUA) For every $b \in \Gamma$, the color set $P_b$ is unbounded above.
\end{itemize}
\noindent We start with the lemma that will establish cases to consider.

\begin{lemma}\label{LemBoundedUniformly}
Suppose that $P$ is a connected $\Gamma$-colored poset that satisfies EC, NA, AC, and UCB$k$ for some $k \ge 1$.  Then $P$ satisfies CBA or CUA.
\end{lemma}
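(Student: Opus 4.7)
The plan is to prove the contrapositive in a strong form: if some color class $P_a$ is bounded above while another $P_c$ is unbounded above, then UCB$k$ fails at some element. The key idea is that ``being bounded above'' propagates from a color to all of its $\Gamma$-neighbors.

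First I would invoke Lemma \ref{LemPConnectedImpliesGammaConnected}(b), which applies since $P$ is connected and satisfies NA, to conclude that the Dynkin diagram $\Gamma$ is connected. Then suppose for contradiction that $P$ satisfies neither CBA nor CUA, so that there exist colors $a$ and $c$ in $\Gamma$ with $P_a$ bounded above and $P_c$ unbounded above. Using connectedness of $\Gamma$, choose a path $a = a_0, a_1, \dots, a_n = c$ of consecutively adjacent colors and locate an index $i$ where the boundedness status flips: $P_{a_i}$ is bounded above but $P_{a_{i+1}}$ is unbounded above. Rename $b := a_i$ and $b' := a_{i+1}$, so that $b \sim b'$.

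Next I would locate a maximum of $P_b$. Since $P_b$ is a chain by EC (it consists of elements of equal color, hence comparable), nonempty (because $\kappa$ is surjective), and bounded above by some $u \in P$, I can fix any $y_0 \in P_b$ and observe that all elements of $P_b$ strictly above $y_0$ lie in the finite interval $[y_0, u]$ by local finiteness; hence $P_b$ has a maximum $x$. Then UCB$k$ gives that $U(x, P)$ is finite.

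Finally, I would contradict that finiteness. By AC, every element of $P_{b'}$ is comparable to $x$, so $P_{b'}$ partitions as those below $x$ and those above $x$ (none can equal $x$ since $b' \ne b$). The upper part is cofinal in $P_{b'}$, which is unbounded above, so it is infinite. Every element of the upper part has color $b' \sim b$ and is greater than $x$, hence lies in $U(x, P)$. This forces $U(x, P)$ to be infinite, contradicting UCB$k$.

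The argument is essentially a two-line propagation argument once the right setup is in place, so I do not expect a serious obstacle. The only minor point of care is checking that bounded-above chains in a locally finite poset actually attain their maxima (handled by the $[y_0, u]$ observation above); without this, one could not produce the element $x$ to which UCB$k$ is applied.
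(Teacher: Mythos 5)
Your proof is correct and uses essentially the same mechanism as the paper's: take the maximal element $x$ of a bounded-above color class, note that the adjacent color class $P_{b'}$ is a chain comparable to $x$ by EC and AC, and use the finiteness of $U(x,P)$ guaranteed by UCB$k$ to control the part of $P_{b'}$ above $x$. The only difference is packaging — the paper propagates ``bounded above'' outward through the connected Dynkin diagram, while you argue by contradiction at the first adjacency where the boundedness status flips — and this is immaterial.
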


\begin{proof}
Suppose that $P$ fails CUA.  Then there is a color $b$ such that $P_b$ is bounded above.  Let $x$ be maximal in $P_b$ and let $c \in \Gamma$ be adjacent to $b$.  By EC and AC the set $P_b \cup P_c$ is a chain.  By local finiteness the portion of $P_c$ less than $x$ must be empty or have a maximal element.  The portion of $P_c$ greater than $x$ is in $U(x,P)$ and must consist of at most $k$ elements by UCB$k$.  Hence $P_c$ is bounded above.  Since $\Gamma$ is connected by Lemma \ref{LemPConnectedImpliesGammaConnected}(b), we see $P_a$ is bounded above for all $a \in \Gamma$.  Thus $P$ satisfies CBA.
\end{proof}

Suppose $P$ is a connected infinite $\Gamma$-colored $d$-complete poset.  By Proposition \ref{PropProctorProof} we know that $P$ does not satisfy CBA.  Hence by Lemma \ref{LemBoundedUniformly} it must satisfy CUA.  Then Proposition \ref{PropLCB2} shows $P$ satisfies LCB2.  A dualized version of Lemma \ref{LemBoundedUniformly} shows that $P$ satisfies one of the following two properties.
\begin{itemize}[nosep]
    \item [] (CBB) For every $b \in \Gamma$, the color set $P_b$ is bounded below.
    \item [] (CUB) For every $b \in \Gamma$, the color set $P_b$ is unbounded below.
\end{itemize}
Thus there are two cases: either $P$ satisfies CUA and CUB, or it satisfies CUA and CBB.  The first case is straightforward.

\begin{proposition}\label{PropUnboundedDoublyImpliesFullHeap}
Suppose that $P$ is a connected infinite $\Gamma$-colored $d$-complete poset.  If $P$ satisfies CUA and CUB, then $P$ is a full heap.
\end{proposition}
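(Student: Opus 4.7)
The plan is to reduce the problem directly to Corollary~\ref{CorFullHeapEquiv}, which characterizes full heaps as $\Gamma$-colored posets satisfying EC, NA, AC, ICE2, and G3. Since $P$ is assumed to be $\Gamma$-colored $d$-complete, it already satisfies the first four of these properties by Definition~\ref{Gcdc}. Hence the only remaining task is to verify G3, namely that for every $a \in \Gamma$, the color chain $P_a$ is isomorphic as a poset to $\mathbb{Z}$.

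To establish G3, I would combine three ingredients. First, EC guarantees that the elements of $P_a$ are pairwise comparable, so $P_a$ is a chain. Second, local finiteness of $P$ is inherited by $P_a$: for any $x < y$ in $P_a$, the interval $[x,y]_{P_a}$ is contained in the finite interval $[x,y]_P$, so it is finite. Third, CUA and CUB together say precisely that $P_a$ is unbounded above and below. So $P_a$ is a locally finite, unbounded-both-ways chain.

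The final small step is to note that any such chain is order-isomorphic to $\mathbb{Z}$. Fix an element $x_0 \in P_a$. Unboundedness above gives some $y > x_0$ in $P_a$; since $[x_0,y]_{P_a}$ is a nonempty finite set of elements strictly greater than $x_0$ (together with $x_0$ and $y$), it has a minimum above $x_0$, which is an immediate successor. Iterating produces $x_0 < x_1 < x_2 < \cdots$, and an entirely dual argument produces predecessors $\cdots < x_{-2} < x_{-1} < x_0$. Local finiteness then forces every element of $P_a$ to appear as some $x_n$ (otherwise one could trap an element strictly between two consecutive $x_n$, contradicting that $x_{n+1}$ is the immediate successor of $x_n$). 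The map $x_n \mapsto n$ is then the desired order isomorphism $P_a \cong \mathbb{Z}$.

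There is no real obstacle here; as the paper itself remarks, this is the straightforward case. The substantive content of the infinite classification is hidden in Proposition~\ref{PropProctorProof} (ruling out CBA) and Proposition~\ref{PropLCB2} (which supplies LCB2 in the CUA case and will be needed for the harder CUA + CBB case). Once the doubly-unbounded hypothesis is in force, the work of verifying G3 is almost immediate, and Corollary~\ref{CorFullHeapEquiv} packages the conclusion into exactly the statement that $P$ is a full heap.
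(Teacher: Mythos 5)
Your proposal is correct and follows essentially the same route as the paper: the paper's proof also observes that CUA, CUB, and local finiteness (together with EC making each $P_a$ a chain) force $P_a \cong \mathbb{Z}$, i.e.\ G3, and then invokes Corollary \ref{CorFullHeapEquiv}. You simply spell out in more detail the standard fact that a locally finite chain unbounded in both directions is order-isomorphic to $\mathbb{Z}$, which the paper takes as immediate.
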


\begin{proof}
Since $P$ satisfies CUA and CUB and is locally finite, the subposet $P_a$ is isomorphic to $\mathbb{Z}$ for every $a \in \Gamma$.  Hence $P$ satisfies G3.  By Corollary \ref{CorFullHeapEquiv} we see $P$ is a full heap.
\end{proof}

We now focus on the case where $P$ satisfies CUA and CBB, which culminates in Corollary \ref{CorFilterOfFullHeap}.

\begin{lemma}\label{LemUnboundedAboveOnlyNoLCB1}
Suppose that $P$ is a connected infinite $\Gamma$-colored $d$-complete poset.  If $P$ satisfies CUA and CBB, then it does not satisfy LCB1. 
\end{lemma}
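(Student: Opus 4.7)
The plan is to exploit order duality. I argue by contradiction: suppose $P$ satisfies LCB1, and aim to apply Proposition \ref{PropProctorProof} to the order dual $P^*$.

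First I would verify that $P^*$, carrying over the coloring $\kappa$ unchanged, is itself a $\Gamma$-colored $d$-complete poset. The properties EC, NA, and AC depend only on comparability and color pairings, both of which are invariant under order reversal. ICE2 is likewise self-dual: if $x < y$ are consecutive elements of color $a$ in $P$, then $y < x$ are consecutive elements of color $a$ in $P^*$ with the identical open interval $(x,y) = (y,x)$ as a set, so the census sum is unchanged. The only axiom that swaps under duality is UCB1: if $x$ is maximal in $P_a$ with respect to $P^*$, then $x$ is minimal in $P_a$ with respect to $P$, and $U(x, P^*) = L(x, P)$; the assumed LCB1 bound on $P$ then yields exactly the UCB1 bound on $P^*$.

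Next I would translate the hypothesis CBB on $P$ into a hypothesis on $P^*$. Since each $P_b$ is bounded below in $P$, each $P_b$ is bounded above in $P^*$; that is, $P^*$ satisfies the hypothesis of Proposition \ref{PropProctorProof}. Applying that proposition forces $P^*$ to be finite, so $P$ is finite, contradicting the assumption that $P$ is infinite. Hence LCB1 must fail.

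The argument has no real obstacle; the only thing to be careful about is the bookkeeping that the axioms EC, NA, AC, ICE2 are genuinely self-dual, so that LCB1 is the unique asymmetry that, if assumed, turns $P^*$ into a $\Gamma$-colored $d$-complete poset. (Note that the hypothesis CUA is not actually used in this approach, so the conclusion could be strengthened, but the statement as given is what is needed in the sequel.)
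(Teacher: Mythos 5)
Your proposal is correct and is essentially the paper's own proof: the paper likewise passes to $P^*$, notes that EC, NA, AC, and ICE2 are self-dual and that CBB becomes CBA, and invokes Proposition \ref{PropProctorProof} to conclude that $P^*$ cannot satisfy UCB1 (equivalently, $P$ cannot satisfy LCB1), since otherwise $P^*$ would be an infinite $\Gamma$-colored $d$-complete poset satisfying CBA. Your observation that CUA is not needed is also consistent with the paper's argument.
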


\begin{proof}
Let $P^*$ be the order dual poset of $P$, and note it satisfies EC, NA, AC, and ICE2.  Since $P$ satisfies CBB, we see $P^*$ satisfies CBA.  Since $P^*$ is infinite, we know by Proposition \ref{PropProctorProof} that it is not $\Gamma$-colored $d$-complete.  Hence $P^*$ does not satisfy UCB1, and so 
$P$ does not satisfy LCB1.
\end{proof}


The notation established in the following discussion will be maintained through the proof of Lemma \ref{LemQFullHeap}.  Let $P$ be a connected infinite $\Gamma$-colored $d$-complete poset that satisfies CUA and CBB.  We describe an iterative process which will produce a new poset $Q$ by ``extending'' $P$ downwardly in a manner similar to the classifications of finite $d$-complete posets in \cite{DDCT} and finite $\Gamma$-colored minuscule posets in \cite{PaperClassifyMinuscule}.  The poset $P$ will be a filter of $Q$, and we will show $Q$ is a full heap in Lemma \ref{LemQFullHeap}.  

By Proposition \ref{PropLCB2} we know that $P$ satisfies LCB2.  By Lemma \ref{LemUnboundedAboveOnlyNoLCB1} we know $P$ does not satisfy LCB1.  Thus we may choose a color $a \in \Gamma$ for which the minimal element $y$ of $P_a$ satisfies $\sum_{z \in L(y,P)} -\theta_{\kappa(z),a} = 2$.  We note that $a$ may not be the only color with this property.  There are two possibilities:
\begin{enumerate}[(i),nosep]
    \item The set $L(y,P)$ contains exactly two elements $u$ and $v$.  Their colors are both 1-adjacent to $a$.
    \item The set $L(y,P)$ contains exactly one element $w$.  Its color is 2-adjacent to $a$.
\end{enumerate}
Create the symbol $x$ and give it the color $a$.  Set $Q_1 := P \cup \{x\}$.  Define a partial order on $Q_1$ 
as the reflexive transitive closure of the order on $P$ and the following new covering relation(s),
depending on whether possibility (i) or (ii) listed above applies:
\begin{enumerate}[(i),nosep]
    \item If $u < v$ (respectively $v < u$), then add the covering relation $x \to u$ (respectively $x \to v$).  If $u$ and $v$ are incomparable, then add the covering relations $x \to u$ and $x \to v$.
    \item Add the covering relation $x \to w$.
\end{enumerate}

\begin{lemma}\label{LemQ1Properties}
The poset $Q_1$ is a connected infinite $\Gamma$-colored $d$-complete poset that satisfies CUA and CBB.  The poset $P$ is a filter of $Q_1$.
\end{lemma}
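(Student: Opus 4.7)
The crucial observation is that the new element $x$ is minimal in $Q_1$: the only covering relations involving $x$ point upward from $x$ to elements of $L(y, P)$, so nothing in $P$ lies below $x$. This immediately yields that $P$ is an order filter of $Q_1$, that $Q_1$ is infinite (since $P$ is), and that $Q_1$ is connected because $P$ is connected and $x$ is attached to $P$ through the new covering relation(s). Local finiteness follows since any interval $[x, p]$ in $Q_1$ decomposes as $\{x\}$ together with intervals of the form $[u, p]$ (and $[v, p]$ in case (i), or $[w, p]$ in case (ii)) in the locally finite $P$. For CUA, each $(Q_1)_b$ contains $P_b$, which is unbounded above; for CBB, the only color class affected is $(Q_1)_a = P_a \cup \{x\}$, and $x$ is its new minimum since $x < y$.

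Next I would verify EC, NA, and AC. For EC, the chains $(Q_1)_b$ with $b \ne a$ are unchanged, while $(Q_1)_a = \{x\} \cup P_a$ remains a chain with $x$ as new bottom element. For NA, the only new neighbor pairs involve $x$, and by construction their colors are $1$-adjacent to $a$ in case (i) or $2$-adjacent to $a$ in case (ii). For AC, the only new issue is comparability of $x$ with any $p \in P$ satisfying $\kappa(p) \sim a$; by AC on $P$ such a $p$ is comparable to $y$, so either $p > y > x$, or $p < y$, placing $p$ in $L(y, P) = \{u, v\}$ (case (i)) or $\{w\}$ (case (ii)), each of whose members lies above $x$ by construction.

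The main calculation is ICE2. Because $x$ is minimal in $Q_1$, it never lies strictly between two elements of $P$, so each pair consecutive in some color class of $P$ retains its open interval in $Q_1$ and inherits ICE2. The only genuinely new consecutive pair is $(x, y)$ in color $a$; its open interval in $Q_1$ consists of the elements of $P$ strictly between $x$ and $y$, and among these the ones of color adjacent to $a$ are precisely the elements of $L(y, P)$: every $z \in L(y, P)$ is above $x$ by the construction, while conversely any $z \in P$ with $\kappa(z) \sim a$ and $z < y$ lies in $L(y, P)$ by definition. Hence the ICE2 sum for $(x,y)$ equals $\sum_{z \in L(y, P)} -\theta_{\kappa(z), a} = 2$ by the choice of $a$. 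Finally, UCB1 holds vacuously for $Q_1$ since CUA makes every color class of $Q_1$ unbounded above, so no maximal elements exist. The only delicate step is making the case split (i)/(ii) align correctly when verifying AC and the new instance of ICE2; the remaining verifications are straightforward bookkeeping.
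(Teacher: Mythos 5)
Your proposal is correct and follows essentially the same route as the paper: observe that the only new element $x$ is minimal in $Q_1$ (so $P$ is a filter, connectivity, local finiteness, CUA, CBB, and the vacuity of UCB1 are immediate), check EC, NA, AC from the construction, and reduce ICE2 to the single new consecutive pair $x<y$, whose open interval meets the colors adjacent to $a$ exactly in $L(y,P)$, giving census $2$ by the choice of $a$. The paper's proof is just a terser version of the same argument, leaving as ``immediate'' the bookkeeping you spell out.
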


\begin{proof}
It is immediate that $Q_1$ is connected, locally finite, satisfies CUA and CBB, and contains $P$ as a filter.  It satisfies UCB1 vacuously.  The properties EC, NA, and AC follow directly from the construction of $Q_1$.  The only pair of consecutive elements of the same color in $Q_1$ that is not contained in $P$ is $x < y$.  By construction, the interval $(x,y)$ contains $L(y,P)$ and no other elements with colors adjacent to $a$.  Thus $\sum_{z \in (x,y)} -\theta_{\kappa(z),a} = \sum_{z \in L(y,P)} -\theta_{\kappa(z),a} = 2$, so ICE2 is satisfied.
\end{proof}

Thus Proposition \ref{PropLCB2} and Lemma \ref{LemUnboundedAboveOnlyNoLCB1} also apply to $Q_1$.  The construction that led to $Q_1$ can be iterated to construct an infinite sequence of posets $Q_0 := P,Q_1,Q_2,Q_3,\dots$, all of which are connected infinite $\Gamma$-colored $d$-complete posets satisfying CUA and CBB.  
These posets are constructed via a sequence $x_1 := x, x_2, x_3, \dots$ of elements, where $Q_i := Q_{i-1} \cup \{x_i\}$ and where $x_i$ is minimal in $Q_i$ for all $i \ge 1$.  Set $Q := P \cup \left( \bigcup_{i=1}^\infty \{x_i\} \right)$.  Define a partial order on $Q$ as the reflective transitive closure of the order on $P$ and the covering relations added to form each poset $Q_i$ for $i \ge 1$.  Then $Q$ is a $\Gamma$-colored poset.

\begin{lemma}\label{LemQFullHeap}
The poset $Q$ is a connected full heap and $P$ is a filter of $Q$.
\end{lemma}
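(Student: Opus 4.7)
The plan is to invoke Corollary \ref{CorFullHeapEquiv} by verifying $Q$ is a locally finite, connected, $\Gamma$-colored poset satisfying EC, NA, AC, ICE2, and G3, and then to check that $P$ is a filter.

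The structural and local properties are inherited from the sequence $(Q_i)$. Because each $x_i$ is added as a minimal element of $Q_i$, an induction shows $Q_{i-1}$ is a filter of $Q_i$; hence $P = Q_0$ is a filter of every $Q_i$ and of $Q$, while each $Q_i$ is itself a filter of $Q$. The sequence is nested and each $Q_i$ is connected, so $Q$ is connected. For local finiteness, if $y \le z$ in $Q$ then both lie in some $Q_i$, and since $Q_i$ is a filter of $Q$ the interval $[y, z]_Q$ equals $[y, z]_{Q_i}$, which is finite. Each of EC, NA, AC, and ICE2 is witnessed by only finitely many elements, so any failure in $Q$ would already fail in some $Q_i$, contradicting Lemma \ref{LemQ1Properties} applied iteratively.

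The remaining property G3 is the main obstacle. Unboundedness above of each $Q_a$ is immediate from CUA on $P \subseteq Q$. Unboundedness below requires each color $a \in \Gamma$ to be chosen as $\kappa(x_i)$ for infinitely many $i$, which I would secure by making the construction's choice of color explicit via a fair rule: at step $i$, among the colors $c$ with $L_c(Q_{i-1}) = 2$---a nonempty set by Lemma \ref{LemUnboundedAboveOnlyNoLCB1} together with Proposition \ref{PropLCB2}---choose $a_i$ to be one selected the fewest times so far, with ties broken by a fixed enumeration of $\Gamma$.

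To see the fair rule picks every color infinitely often, I would induct on distance in $\Gamma$, which is connected by Lemma \ref{LemPConnectedImpliesGammaConnected}. At least one color is chosen infinitely often because $\Gamma$ is finite and the sequence is infinite. Suppose inductively that $a$ is chosen infinitely often and that $b \sim a$. Each $x_i$ of color $a$ is minimal in $Q_i$, hence lies below the current minimum $y_b$ of color $b$, so $L_b$ at $y_b$ grows by $|\theta_{ab}| \ge 1$ with each such extension. Since $L_b \le 2$ by LCB2, after a bounded number of $a$-extensions without an intervening $b$-extension the color $b$ becomes eligible; and because $b$'s count stays bounded while the counts of the colors being chosen grow, the fair rule eventually selects $b$. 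Iterating, $b$ is chosen infinitely often, and connectedness of $\Gamma$ propagates this to all colors. Hence every $Q_a$ is unbounded both above and below, so G3 holds; Corollary \ref{CorFullHeapEquiv} then yields that $Q$ is a full heap, and $P$ is a filter of $Q$ as established above.
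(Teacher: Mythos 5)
Your proof is correct in all of its structural steps, and these coincide with the paper's: $P$ and each $Q_i$ are filters of $Q$, local finiteness and connectedness pass to the union because every relevant finite configuration already lies in some $Q_k$, and EC, NA, AC, ICE2 are inherited in the same way, so everything reduces to G3 via Corollary \ref{CorFullHeapEquiv}. For G3 you take a genuinely different route. The paper leaves the construction's color choices arbitrary and argues by contradiction: if some color $a$ occurred only finitely often among the added elements, connectedness of $\Gamma$ would supply an adjacent color $b$ occurring infinitely often, so the minimal element of color $a$ in $Q$ would have infinitely many elements of color $b$ below it, and then some $Q_j$ would fail LCB2, contradicting Proposition \ref{PropLCB2}. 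You instead fix a fair selection rule and show it visits every color infinitely often. Your underlying mechanism is the same one (Proposition \ref{PropLCB2} caps the lower census at $2$ for every $Q_i$, while each extension by an adjacent color raises the census at the current minimal element by at least $1$), but two remarks are in order. First, the fair rule is superfluous: your own census observation already shows that for $a \sim b$ there can be at most two $a$-extensions between consecutive $b$-extensions under \emph{any} admissible sequence of choices, so once one color is extended infinitely often, all of them are. Second, and relatedly, as written your argument establishes the lemma only for the fair-rule instantiation of the construction, whereas the lemma (and the paper's proof) concerns the $Q$ obtained from arbitrary admissible choices. This costs nothing for Corollary \ref{CorFilterOfFullHeap}, which only needs some full heap containing $P$ as a filter, but to match the statement exactly you should drop the fair rule and run your census bound as a direct contradiction, which is precisely what the paper does.
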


\begin{proof}
The posets $P = Q_0,Q_1,Q_2,\dots$ are all filters of $Q$ by construction.  Let $x,y \in Q$ with $x < y$.  Then there exists $k \ge 0$ with $x < y$ in $Q_k$.  Since $Q_k$ is a filter of $Q$, we have $[x,y] \subseteq Q_k$.  Since $Q_k$ is locally finite, we see $[x,y]$ is finite.  Hence $Q$ is locally finite.

Suppose for a contradiction that $Q$ can be written as the disjoint union of two nonempty subposets $R$ and $S$.  Let $u \in R$ and $v \in S$.  There exists some $k \ge 0$ such that $u,v \in Q_k$.  Then $Q_k$ is the disjoint union of two nonempty subposets $Q_k \cap R$ and $Q_k \cap S$, violating that $Q_k$ is connected.  Hence $Q$ is connected.

The poset $Q$ satisfies EC, NA, AC, and ICE2; any occurrence of neighbors or two elements with equal or adjacent colors in $Q$ occurs in $Q_k$ for some $k \ge 0$, and $Q_k$ satisfies these properties.  
Suppose $\bigcup_{i=1}^\infty \{x_i\}$ contains finitely many elements (possibly zero) of some color.  Since $\Gamma$ is connected by Lemma \ref{LemPConnectedImpliesGammaConnected}(b), we can find adjacent colors $a,b \in \Gamma$ such that $\bigcup_{i=1}^\infty \{x_i\}$ contains finitely many (possibly zero) elements of color $a$ and infinitely many elements of color $b$.  Then $Q_a := \{z \in Q \ | \ \kappa(z) = a\}$ is bounded below in $Q$; let $x$ be minimal in $Q_a$.  By AC we know $x$ is comparable to every element in $Q_b := \{z \in Q \ | \ \kappa(z) = b\}$.  By local finiteness, since $Q_b$ is unbounded below there must be at least one element of color $b$ below $x$.  Thus there are infinitely many such elements.  Then $Q_j$ fails LCB2 for some $j \ge 1$, which violates Proposition \ref{PropLCB2} applied to $Q_j$.  Hence $\bigcup_{i=1}^\infty \{x_i\}$ contains infinitely many elements of each color, and so $Q$ satisfies both CUA and CUB.  Hence $Q$ satisfies G3, so by Corollary \ref{CorFullHeapEquiv} we see $Q$ is a full heap.
\end{proof}

The preceding work resolves the case in which $P$ satisfies CUA and CBB.

\begin{corollary}\label{CorFilterOfFullHeap}
Suppose that $P$ is a connected infinite $\Gamma$-colored $d$-complete poset.  If $P$ satisfies CUA and CBB, then $P$ is a filter of some connected full heap.
\end{corollary}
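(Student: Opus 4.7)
The plan is to observe that the corollary is an immediate consequence of the construction that was set up immediately before Lemma \ref{LemQFullHeap}, together with that lemma itself. All of the substantive work has been absorbed into the preceding discussion: starting from $P$, one uses Proposition \ref{PropLCB2} to obtain LCB2 and Lemma \ref{LemUnboundedAboveOnlyNoLCB1} to see that LCB1 fails, which produces a color $a$ and minimal element $y \in P_a$ with lower frontier census equal to $2$. Adjoining a new minimum $x$ of color $a$ below the relevant one or two elements of $L(y,P)$ produces $Q_1$, and Lemma \ref{LemQ1Properties} verifies that $Q_1$ inherits the hypotheses (connectedness, local finiteness, CUA, CBB, and $\Gamma$-colored $d$-completeness) with $P$ sitting inside as a filter.

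Since $Q_1$ satisfies the same hypotheses as $P$, the construction can be iterated verbatim to produce the sequence $Q_0 := P \subseteq Q_1 \subseteq Q_2 \subseteq \cdots$, and passing to the union gives the poset $Q := P \cup \bigcup_{i \ge 1}\{x_i\}$ in which each $Q_k$ is a filter. Lemma \ref{LemQFullHeap} then directly asserts that $Q$ is a connected full heap and that $P$ is a filter of $Q$, which is exactly the statement of the corollary. So the ``proof'' is really a one-line appeal to Lemma \ref{LemQFullHeap}.

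The main obstacle, which has already been overcome in the preceding pages, was establishing the two ingredients on which Lemma \ref{LemQFullHeap} rested: (1) the existence of a downward extension step at all, which required LCB2 (from Proposition \ref{PropLCB2}) to keep the interval census at $2$ bounded and LCB1's failure (from Lemma \ref{LemUnboundedAboveOnlyNoLCB1}) to guarantee the step is nonvacuous; and (2) showing that the limiting poset $Q$ actually has color chains isomorphic to $\mathbb{Z}$ so that Corollary \ref{CorFullHeapEquiv} applies. Given these, the corollary itself requires no further argument beyond citing the construction and Lemma \ref{LemQFullHeap}.
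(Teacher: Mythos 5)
Your proposal is correct and matches the paper exactly: the paper offers no separate argument for this corollary, simply noting that ``the preceding work resolves the case,'' so the result is indeed an immediate citation of the construction of $Q$ and Lemma \ref{LemQFullHeap}. Your summary of the supporting ingredients (Proposition \ref{PropLCB2}, Lemma \ref{LemUnboundedAboveOnlyNoLCB1}, Lemma \ref{LemQ1Properties}) accurately reflects where the real work was done.
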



We now obtain the main result of this section.

\begin{theorem}\label{TheoremClassifyInfinite}

Let $P$ be a connected infinite $\Gamma$-colored poset.  Then $P$ is a $\Gamma$-colored $d$-complete poset if and only if it is a filter of some connected full heap.
\end{theorem}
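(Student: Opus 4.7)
The plan is to prove both directions separately, leveraging the structural results of Sections 3 and 4 so that essentially all the heavy lifting is already done.

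For the forward direction, assume $P$ is a connected infinite $\Gamma$-colored $d$-complete poset. Since $P$ satisfies UCB1, I can apply Lemma \ref{LemBoundedUniformly} (with $k=1$) to conclude $P$ satisfies CBA or CUA. Proposition \ref{PropProctorProof} rules out CBA because $P$ is infinite, so $P$ satisfies CUA. Next, I invoke the order-dual analogue of Lemma \ref{LemBoundedUniformly} (which applies because LCB1 or LCB2 holds: LCB2 holds by Proposition \ref{PropLCB2}, and the dualized argument only needs a lower frontier census bound) to split into the two subcases already handled in the excerpt: if $P$ satisfies CUA and CUB, then Proposition \ref{PropUnboundedDoublyImpliesFullHeap} tells me $P$ is itself a full heap, which is trivially a filter of itself; if $P$ satisfies CUA and CBB, then Corollary \ref{CorFilterOfFullHeap} provides the desired embedding of $P$ as a filter of a connected full heap.

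For the reverse direction, let $H$ be a connected full heap and suppose $P$ is a filter of $H$ (with the induced coloring, which must be surjective by hypothesis). I need to verify $P$ satisfies EC, NA, AC, ICE2, and UCB1. The key observation is that because $P$ is a filter, intervals within $P$ coincide with the corresponding intervals in $H$: if $x,y \in P$ with $x < y$ and $z \in H$ satisfies $x < z < y$, then $z > x \in P$, so $z \in P$. This lets EC, AC, and NA transfer directly from $H$ (which satisfies them by Corollary \ref{CorFullHeapEquiv}), since comparability of equally or adjacently colored elements in $H$ restricts to $P$, and a covering pair in $P$ remains a covering pair in $H$. The ICE2 property transfers by the same interval coincidence: consecutive elements of color $a$ in $P$ are consecutive in $H$, and the census sum agrees.

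The only subtle property is UCB1. I claim it holds vacuously: no color class of $P$ has a maximal element. Indeed, suppose $x \in P$ with $\kappa(x) = a$. By G3, $H_a \cong \mathbb{Z}$, so there is an element $x' \in H_a$ with $x' > x$ in $H$; then $x' > x \in P$ forces $x' \in P$ since $P$ is a filter. Hence $P_a$ is unbounded above and UCB1 imposes no constraint. Local finiteness of $P$ also transfers from $H$ via the same filter-interval coincidence, completing the verification.

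I do not expect a real obstacle here: the forward direction is a pure case analysis that assembles Proposition \ref{PropProctorProof}, Lemma \ref{LemBoundedUniformly}, Proposition \ref{PropUnboundedDoublyImpliesFullHeap}, and Corollary \ref{CorFilterOfFullHeap}, while the reverse direction is a quick check once one notices the filter-interval identity and the vacuity of UCB1. The only point requiring a moment's care is confirming that the dual of Lemma \ref{LemBoundedUniformly} is in force, which follows because Proposition \ref{PropLCB2} gives LCB2 in the relevant case.
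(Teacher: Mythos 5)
Your proposal is correct and follows essentially the same route as the paper: the forward direction is the identical case analysis (Lemma \ref{LemBoundedUniformly} plus Proposition \ref{PropProctorProof} to force CUA, Proposition \ref{PropLCB2} to license the dual of Lemma \ref{LemBoundedUniformly}, then Proposition \ref{PropUnboundedDoublyImpliesFullHeap} or Corollary \ref{CorFilterOfFullHeap} in the two subcases), and the reverse direction is the same transfer of EC, NA, AC, ICE2 to filters with UCB1 holding vacuously via G3. Your explicit filter--interval coincidence argument just spells out what the paper leaves as ``these properties are preserved in filters.''
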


\begin{proof}
Let $P$ be a connected infinite $\Gamma$-colored $d$-complete poset.  Lemma \ref{LemBoundedUniformly} shows $P$ satisfies CBA or CUA.  Since $P$ is infinite, by Proposition \ref{PropProctorProof} it must satisfy CUA.  Then Proposition \ref{PropLCB2} shows $P$ satisfies LCB2.  The dualized version of Lemma \ref{LemBoundedUniformly} shows $P$ satisfies CBB or CUB.  If $P$ satisfies CUB, then Proposition \ref{PropUnboundedDoublyImpliesFullHeap} shows $P$ is a full heap (and it is a filter of itself).  If $P$ satisfies CBB, then Corollary \ref{CorFilterOfFullHeap} shows that $P$ is the filter of some connected full heap.

Now let $P$ be the filter of some connected full heap.  By Corollary \ref{CorFullHeapEquiv} this full heap has the properties EC, NA, AC, and ICE2; these properties are preserved in filters.  By G3 the set $P_a$ is unbounded above for all $a \in \Gamma$, so UCB1 is vacuously satisfied.  Thus $P$ is $\Gamma$-colored $d$-complete.
\end{proof}



\section{Classification of \texorpdfstring{$\Gamma$}{Gamma}-colored \texorpdfstring{$d$}{d}-complete posets}\label{SectionClassifydComplete}

We classify all $\Gamma$-colored $d$-complete posets in this section.  To do this, Proposition \ref{PropConnected} shows that it is sufficient to classify the connected $\Gamma$-colored $d$-complete posets.  Theorems \ref{TheoremMSequivJRS} and \ref{TheoremClassifyInfinite} allow us to apply the classification of dominant minuscule heaps by R.A. Proctor and J.R. Stembridge and the classification of full heaps by R.M. Green and Z.S. McGregor-Dorsey.  After describing these previous classifications, we obtain our main result in Theorem \ref{TheoremClassify}, namely, the classification of $\Gamma$-colored $d$-complete posets.


\begin{lemma}\label{LemHeapConnected}
Suppose that $P$ is $\Gamma$-colored and satisfies EC.
\begin{enumerate}[(a),nosep]
    \item Then $P$ is a disjoint union of at most $|\Gamma|$ connected posets; call them $P_1,\dots,P_r$ for some $1 \le r \le |\Gamma|$.
    \item Suppose $P$ also satisfies NA and AC.  Then $\Gamma$ is the disjoint union of $r$ connected Dynkin diagrams $\Gamma_1,\dots,\Gamma_r$, and $P_k$ is $\Gamma_k$-colored for $1 \le k \le r$.
\end{enumerate}
\end{lemma}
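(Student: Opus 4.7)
The plan is to leverage the EC property, which forces all elements of a given color to lie in one connected component of $P$.

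For part (a), I would first note that in a locally finite poset, any two comparable elements are linked by a finite saturated chain, hence by a path in the Hasse diagram; they therefore belong to the same connected component. Combined with EC this gives, for each color $a \in \Gamma$, that the entire set $P_a$ sits in a single connected component $C_a$ of $P$. Since $\kappa$ is surjective, every element of $P$ lies in some $P_a$ and hence in some $C_a$, so the connected components of $P$ are exactly the distinct $C_a$. Relabeling these as $P_1,\dots,P_r$ yields the bound $r \le |\Gamma|$.

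For part (b), I would set $\Gamma_k := \kappa(P_k)$ for each $k$ and regard it as the full subgraph of the Dynkin diagram $\Gamma$ on those nodes; the restriction of $\kappa$ to $P_k$ is then a surjection onto $\Gamma_k$, making $P_k$ a $\Gamma_k$-colored poset. The subsets $\Gamma_1,\dots,\Gamma_r$ cover $\Gamma$ by surjectivity of $\kappa$, and the argument from (a) makes them pairwise disjoint, since a shared color would place its elements in two distinct components. Connectedness of each $\Gamma_k$ should then follow by applying Lemma \ref{LemPConnectedImpliesGammaConnected}(b) to $P_k$, which is connected and inherits NA from $P$.

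The main verification, and the closest thing to an obstacle, is to check that no edge of $\Gamma$ crosses between different $\Gamma_k$'s, so that the decomposition $\Gamma = \Gamma_1 \sqcup \cdots \sqcup \Gamma_r$ is honest as Dynkin diagrams rather than merely as node sets. Here AC does the work: if $a \in \Gamma_k$ and $b \in \Gamma_j$ were adjacent with $k \ne j$, then picking $x \in P_k$ and $y \in P_j$ of colors $a$ and $b$ respectively, AC forces $x$ and $y$ to be comparable and hence to lie in the same component of $P$, contradicting $k \ne j$.
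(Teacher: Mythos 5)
Your proposal is correct and follows essentially the same route as the paper: part (a) rests on the observation that EC confines each color class to a single connected component (the paper phrases this dually, via antichains of elements from distinct components having distinct colors), and part (b) uses $\Gamma_k := \kappa(P_k)$, with EC giving disjointness, AC ruling out edges between different $\Gamma_k$, and Lemma \ref{LemPConnectedImpliesGammaConnected}(b) giving connectedness of each $\Gamma_k$. No gaps.
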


\begin{proof}
Any collection of elements taken from different connected components of $P$ form an antichain in $P$.  Their colors are distinct by EC.  Therefore the number of connected components of $P$ is at most $|\Gamma|$, which proves (a).  Let $P_1,\dots,P_r$ be the connected components of $P$ for some $1 \le r \le |\Gamma|$.

Now suppose $P$ additionally satisfies NA and AC.  For each $1 \le k \le r$, define $\Gamma_k := \kappa(P_k)$.  
These sets are nonempty, and since $\kappa$ is surjective we have $\Gamma = \bigcup_{k=1}^r \Gamma_k$ as sets of colors.
Fix $1 \le i,j \le r$ with $i \ne j$.  By EC (respectively AC) no element in $P_i$ can have a color that is the same as (respectively adjacent to) the color of an element in $P_j$.  Thus $\Gamma_1,\dots,\Gamma_r$ is a partition of $\Gamma$ and there are no edges in $\Gamma$ between colors in different sets of this partition.
Thus $\Gamma_1,\dots,\Gamma_r$ are Dynkin diagrams using the edges and integer labels from $\Gamma$, and they respectively color $P_1,\dots,P_r$.
These connected components inherit NA from $P$, so by Lemma \ref{LemPConnectedImpliesGammaConnected}(b) their Dynkin diagrams are connected.  This proves (b).
\end{proof}

\noindent So when $P$ satisfies EC, NA, and AC, it must have the same number of connected components as $\Gamma$.  We isolate the most useful version of this result in a corollary that extends Lemma \ref{LemPConnectedImpliesGammaConnected}(b).

\begin{corollary}\label{CorPConnectediffGammaConnected}
Suppose that $P$ satisfies EC, NA, and AC.  Then $P$ is connected if and only if $\Gamma$ is connected.
\end{corollary}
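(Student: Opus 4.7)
The plan is to deduce this corollary directly from the two lemmas just established, treating each direction of the biconditional separately. The forward direction is essentially immediate: assuming $P$ is connected and satisfies NA, Lemma \ref{LemPConnectedImpliesGammaConnected}(b) gives us that $\Gamma$ is connected, so there is nothing new to prove here. I would simply cite that lemma.

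For the reverse direction, I would argue by contrapositive using Lemma \ref{LemHeapConnected}(b). Suppose $P$ is not connected, and let $P_1,\dots,P_r$ with $r \ge 2$ be its connected components. Since $P$ satisfies EC, NA, and AC, part (b) of Lemma \ref{LemHeapConnected} yields a partition of $\Gamma$ into connected Dynkin diagrams $\Gamma_1,\dots,\Gamma_r$ with no edges between them, each coloring the corresponding component $P_k$. In particular $r \ge 2$ implies $\Gamma$ is itself disconnected, contradicting the hypothesis. Taking the contrapositive gives that $\Gamma$ connected forces $P$ connected.

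Combining both directions yields the biconditional. There is really no substantive obstacle here, since all the work has been done in Lemma \ref{LemHeapConnected} and Lemma \ref{LemPConnectedImpliesGammaConnected}; the corollary is essentially just a repackaging. The only thing to be careful about is noting explicitly that we are using all three hypotheses EC, NA, and AC in invoking Lemma \ref{LemHeapConnected}(b), whereas only NA is needed for the forward direction via Lemma \ref{LemPConnectedImpliesGammaConnected}(b). The proof should be only two or three sentences long.
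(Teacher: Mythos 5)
Your proof is correct and matches the paper's (implicit) argument: the paper derives this corollary directly from Lemma \ref{LemHeapConnected}(b), observing that $P$ and $\Gamma$ have the same number of connected components, which is exactly the content of your two directions. No issues.
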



The decomposition in Lemma \ref{LemHeapConnected}(b) extends to $\Gamma$-colored $d$-complete and $\Gamma$-colored minuscule posets.

\begin{proposition}\label{PropConnected}
Let $P$ be a $\Gamma$-colored poset.  Then $P$ is $\Gamma$-colored $d$-complete (respectively $\Gamma$-colored minuscule) if and only if there is some integer $r$ with $1 \le r \le |\Gamma|$, connected posets $P_1,\dots,P_r$, and connected Dynkin diagrams $\Gamma_1,\dots,\Gamma_r$ such that $P$ is the disjoint union of $P_1,\dots,P_r$ and $\Gamma$ is the disjoint union of $\Gamma_1,\dots,\Gamma_r$ and $P_k$ is $\Gamma_k$-colored $d$-complete (respectively $\Gamma_k$-colored minuscule) for all $1 \le k \le r$.
\end{proposition}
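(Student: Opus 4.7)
The plan is to deduce this proposition by applying Lemma \ref{LemHeapConnected}(b) and then verifying that each of the five defining axioms of a $\Gamma$-colored $d$-complete poset (plus LCB1 in the minuscule case) behaves well under the resulting decomposition.

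For the forward direction, assume $P$ is $\Gamma$-colored $d$-complete; in particular $P$ satisfies EC, NA, and AC. I would invoke Lemma \ref{LemHeapConnected}(b) to obtain connected components $P_1,\dots,P_r$ and a corresponding partition of $\Gamma$ into connected Dynkin diagrams $\Gamma_1,\dots,\Gamma_r$, with $\kappa$ restricting to a surjection $P_k\to\Gamma_k$ for each $k$. I would then verify each $P_k$ is $\Gamma_k$-colored $d$-complete. EC, NA, and AC restrict straightforwardly to each component (any witness to a failure in $P_k$ is already a witness in $P$). For ICE2, if $x<y$ are consecutive elements of color $a$ in $P_k$, then they are consecutive in $P$ as well, and the interval $(x,y)$ lies entirely inside $P_k$, so the census sum is the same. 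The key observation for UCB1 is that if $x\in P_k$ is maximal in $(P_k)_a$, then $x$ is also maximal in $P_a$ (since $P_a\subseteq P_k$ by the partition of $\Gamma$ together with EC), and moreover $U(x,P)=U(x,P_k)$ because any element above $x$ with color adjacent to $a$ must lie in $P_k$ by the choice of $\Gamma_k$. Hence the UCB1 sum for $P_k$ equals that for $P$.

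For the converse, assume each $P_k$ is $\Gamma_k$-colored $d$-complete and $P=\bigsqcup P_k$, $\Gamma=\bigsqcup \Gamma_k$. Surjectivity of $\kappa:P\to\Gamma$ follows from the component-wise surjectivity. EC and AC hold because colors from different $\Gamma_k$'s are distinct and distant, so any two elements of $P$ with equal or adjacent colors lie in a common component where comparability is given. NA holds because neighbors lie in a common component. For ICE2, if $x<y$ are consecutive elements of color $a$ in $P$, they again lie in a single $P_k$, and elements of $(x,y)$ with color adjacent to $a$ are forced into the same $P_k$, making the sum match that computed in $P_k$. For UCB1, an element $x$ maximal in $P_a$ sits in the unique $P_k$ with $a\in\Gamma_k$, is maximal there, and $U(x,P)=U(x,P_k)$ by the same connectedness-of-Dynkin argument as above.

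The minuscule case is identical: LCB1 is a lower-side analogue of UCB1 and the same localization argument applies verbatim, using the fact that any element of $L(x,P)$ lies in the same component as $x$. The only mildly delicate point, and what I expect to be the main thing to spell out carefully, is the identification $U(x,P)=U(x,P_k)$ (and its dual), which is where the partition of $\Gamma$ into the $\Gamma_k$'s, rather than just the partition of $P$, actually does work: without this, a maximal element in one component could a priori have neighbors of adjacent color in another component, and AC would then force comparability across components, contradicting disjointness. Once this localization is recorded once, all five (or six) properties transfer between $P$ and its components without further computation.
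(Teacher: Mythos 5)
Your proposal is correct and follows essentially the same route as the paper: invoke Lemma \ref{LemHeapConnected}(b) to get the matching decompositions of $P$ and $\Gamma$, then observe that each of the defining properties localizes to the components (and conversely is preserved under disjoint unions) because they only involve comparable elements and equal or adjacent colors. The paper states this preservation in one sentence where you spell out the localization $U(x,P)=U(x,P_k)$ explicitly, but the underlying argument is the same.
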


\begin{proof}
Suppose $P$ is $\Gamma$-colored $d$-complete (or $\Gamma$-colored minuscule).  Since $P$ satisfies EC, NA, and AC, Lemma \ref{LemHeapConnected}(b) implies the existence of the required disjoint union and coloring decompositions of $P$ and $\Gamma$.  The coloring properties EC, NA, AC, ICE2, UCB1, and LCB1 are stated in terms of comparable elements in $P$ and equal or adjacent colors in $\Gamma$, so each property is preserved under these coloring decompositions.  Hence $P_k$ is $\Gamma_k$-colored $d$-complete (or $\Gamma_k$-colored minuscule) for every $1 \le k \le r$.  Since these coloring properties are preserved under unions, the converse holds.
\end{proof}

\noindent So classifying all $\Gamma$-colored $d$-complete (or minuscule) posets reduces to classifying the connected ones.

The finite $\Gamma$-colored $d$-complete posets are precisely the dominant minuscule heaps by Theorem \ref{TheoremMSequivJRS}.  So to classify the connected ones, we apply the classification of connected dominant minuscule heaps presented in Section 4 of \cite{Ste}.  Here we give a brief overview of this classification.

Let $P$ be a connected dominant minuscule heap colored by a Dynkin diagram $\Gamma$.  Let $T$ be the set of maximal elements of each color and call it the \emph{top tree} of $P$.  Suppose $x,y \in T$ and $x \to y$.  If $y$ is the only element of its color in $P$, then this edge is a \emph{slant edge} of $P$.  The poset $P$ is \emph{slant irreducible} if it contains no slant edges.

Every dominant minuscule heap can be deconstructed into one or more slant irreducible dominant minuscule heaps by removing all slant edges.  These subposets are colored by the Dynkin diagrams produced by removing the corresponding edges from $\Gamma$ that exist by NA.  Conversely, every connected dominant minuscule heap can be built up from slant irreducible dominant minuscule heaps in the following way.  Covering relations are added in which the maximal element of one dominant minuscule heap is covered by an element in another dominant minuscule heap whose color only appears once in that poset. This creates a slant edge in the new poset, and this process is repeated until a connected poset is produced. 
For each newly created slant edge $x \to y$ in the poset, an adjacency is added between the respective colors in the two Dynkin diagrams so that NA and UCB1 are satisfied.  The property NA requires that $\kappa(x) \sim \kappa(y)$, and the property UCB1 requires that $\kappa(y)$ is 1-adjacent to $\kappa(x)$.
This process produces a single connected acyclic Dynkin diagram.
The resulting connected dominant minuscule heap is a \emph{slant sum} of the original slant irreducible dominant minuscule heaps.  Hence the problem of classifying the dominant minuscule heaps is reduced to classifying the slant irreducible ones.

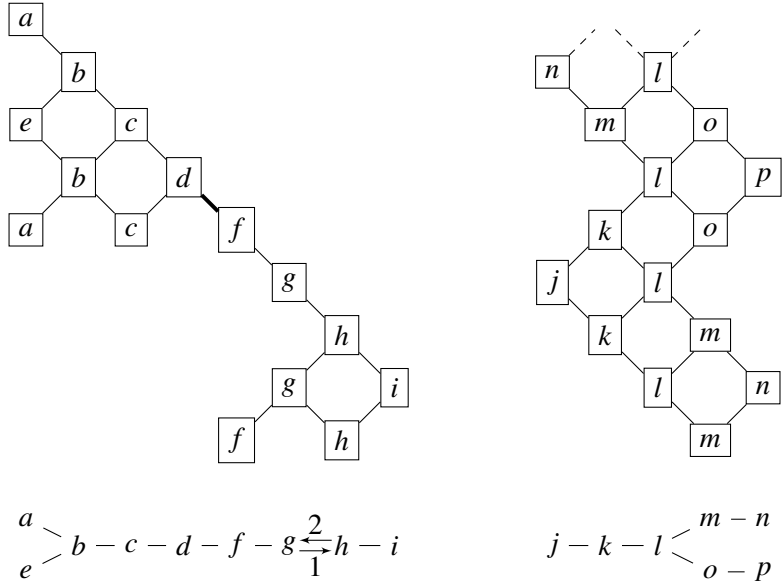
\begin{figure}[t!]
    \centering
    \begin{tikzpicture}[scale=.7]
    \tikzset{edge/.style = {->,> = latex'}}
        \node[draw] (A1) at (0,20){$a$};
        \node[draw] (B1) at (1,19){$b$};
        \node[draw] (C1) at (2,18){$c$};
        \node[draw] (D1) at (3,17){$d$};
        \node[draw] (E1) at (0,18){$e$};
        \node[draw] (B2) at (1,17){$b$};
        \node[draw] (C2) at (2,16){$c$};
        \node[draw] (A2) at (0,16){$a$};
        \node[draw] (F1) at (4,16){$f$};
        \node[draw] (G1) at (5,15){$g$};
        \node[draw] (H1) at (6,14){$h$};
        \node[draw] (I1) at (7,13){$i$};
        \node[draw] (G2) at (5,13){$g$};
        \node[draw] (H2) at (6,12){$h$};
        \node[draw] (F2) at (4,12){$f$};
        
        \draw (A1) -- (B1) -- (C1) -- (D1) -- (C2) -- (B2) -- (C1) (B1) -- (E1) -- (B2) -- (A2) (F1) -- (G1) -- (H1) -- (I1) -- (H2) -- (G2) (H1) -- (G2) -- (F2);
        \draw[line width=1.5pt] (D1) -- (F1);
        
        \node (A) at (0,10.5){$a$};
        \node (B) at (1,10){$b$};
        \node (E) at (0,9.5){$e$};
        \node (C) at (2,10){$c$};
        \node (D) at (3,10){$d$};
        \node (F) at (4,10){$f$};
        \node (G) at (5,10){$g$};
        \node (H) at (6,10){$h$};
        \node (I) at (7,10){$i$};
        \node (GT) at (5,10.1){};
        \node (GB) at (5,9.9){};
        \node (HT) at (6,10.1){};
        \node (HB) at (6,9.9){};
        
        \draw (A) -- (B) -- (E) (B) -- (C) -- (D) -- (F) -- (G) (H) -- (I);
        \draw [edge] (HT) to (GT);
        \draw [edge] (GB) to (HB);
        \node at (5.5,10.4){2};
        \node at (5.5,9.6){1};
        
        \node (J) at (10,10){$j$};
        \node (K) at (11,10){$k$};
        \node (L) at (12,10){$l$};
        \node (M) at (13,10.5){$m$};
        \node (N) at (14,10.5){$n$};
        \node (O) at (13,9.5){$o$};
        \node (P) at (14,9.5){$p$};
        
        \draw (J) -- (K) -- (L) -- (M) -- (N) (L) -- (O) -- (P);
        
        \node[draw] (M1) at (13,12){$m$};
        \node[draw] (N1) at (14,13){$n$};
        \node[draw] (L1) at (12,13){$l$};
        \node[draw] (K1) at (11,14){$k$};
        \node[draw] (J1) at (10,15){$j$};
        \node[draw] (M2) at (13,14){$m$};
        \node[draw] (L2) at (12,15){$l$};
        \node[draw] (K2) at (11,16){$k$};
        \node[draw] (O1) at (13,16){$o$};
        \node[draw] (P1) at (14,17){$p$};
        \node[draw] (L3) at (12,17){$l$};
        \node[draw] (O2) at (13,18){$o$};
        \node[draw] (M3) at (11,18){$m$};
        \node[draw] (N2) at (10,19){$n$};
        \node[draw] (L4) at (12,19){$l$};
        \node (M4) at (11,20){};
        \node (K3) at (13,20){};
        
        \draw[dashed] (N2) -- (M4) -- (L4) -- (K3);
        \draw (M1) -- (L1) -- (K1) -- (J1) -- (K2) -- (L2) -- (K1) (L2) -- (M2) -- (L1) (M2) -- (N1) -- (M1) (K2) -- (L3) -- (O1) -- (L2) (O1) -- (P1) -- (O2) -- (L3) (N2) -- (M3) -- (L3) (O2) -- (L4) -- (M3);
    \end{tikzpicture}
    \caption{A $\Gamma$-colored $d$-complete poset $P$.  Here both $P$ and $\Gamma$ have two connected components.  The connected component on the left is a slant sum of two slant irreducible dominant minuscule heaps (the slant edge is bold).  The connected component on the right is an order filter of a full heap.}
    \label{FigdComplete}
\end{figure}

The classification of slant irreducible dominant minuscule heaps was accomplished by Proctor and Stembridge.  Proctor introduced colored and uncolored $d$-complete posets in \cite{Wave} and showed there is a unique colored $d$-complete poset for each uncolored one; see \cite[Prop. 8.6]{Wave} and note that paper uses an order dualized definition of $d$-complete.  Colored $d$-complete posets are equivalent to dominant minuscule heaps that have been colored by simply laced Dynkin diagrams.  This equivalence has been known to Proctor for some time (personal communication)
and is used implicitly by Stembridge in \cite{Ste}.  There are equivalent notions of slant irreducibility and slant sum in \cite{DDCT} (see the discussion following Fact 7.5.1 in \cite{Str}).  Proctor classified slant irreducible $d$-complete posets into fifteen families \cite[Thm. 7]{DDCT}.  Stembridge extended this classification \cite[Thm. 4.2]{Ste} with two additional families colored by multiply laced Dynkin diagrams.


The connected infinite $\Gamma$-colored $d$-complete posets are precisely the filters of connected full heaps by Theorem \ref{TheoremClassifyInfinite}.  So we apply the classification of full heaps which was accomplished by Green and McGregor-Dorsey.
Green provided the list of full heaps colored by Dynkin diagrams of affine Kac--Moody type in Theorem 6.6.2 of \cite{Gre}.  In  his doctoral thesis written under Green's supervision, McGregor-Dorsey showed in Theorem 4.7.1 of \cite{McD} that the connected components of any finite Dynkin diagram coloring a full heap must have affine type.  Since $P$ is connected if and only if $\Gamma$ is connected by Corollary \ref{CorPConnectediffGammaConnected}, the list provided by Green is a complete list of connected full heaps colored by Dynkin diagrams with finitely many nodes.

We thus obtain the classification of all $\Gamma$-colored $d$-complete posets from Proposition \ref{PropConnected} and Theorems \ref{TheoremMSequivJRS} and \ref{TheoremClassifyInfinite} by applying the previous classifications of Proctor, Stembridge, Green, and McGregor-Dorsey.  Figure \ref{FigdComplete} displays a typical $\Gamma$-colored $d$-complete poset with two connected components.



\begin{theorem}\label{TheoremClassify}
Let $P$ be a $\Gamma$-colored poset.  Then $P$ is $\Gamma$-colored $d$-complete if and only if $P$ is the disjoint union of connected posets in which each is either a finite slant sum of slant irreducible dominant minuscule heaps from \cite{DDCT,Ste} or an infinite filter of some connected full heap from \cite{Gre}.  In this case, the Dynkin diagram $\Gamma$ is the disjoint union of the Dynkin diagrams coloring the connected components of $P$.
\end{theorem}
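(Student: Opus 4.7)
The plan is to assemble this theorem as a synthesis of the structural results already established earlier in the paper, together with the prior classifications cited in the discussion preceding the statement. I would not need any new combinatorial arguments; the work is bookkeeping between the ``connected-components'' reduction and the two separate classifications for the finite and infinite cases.

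First I would invoke Proposition \ref{PropConnected} to reduce the problem to classifying connected $\Gamma$-colored $d$-complete posets, noting that any $\Gamma$-colored $d$-complete poset $P$ splits as a disjoint union $P = P_1 \sqcup \cdots \sqcup P_r$ with a compatible decomposition $\Gamma = \Gamma_1 \sqcup \cdots \sqcup \Gamma_r$ into connected Dynkin diagrams such that each $P_k$ is connected and $\Gamma_k$-colored $d$-complete; conversely, disjoint unions of connected $\Gamma_k$-colored $d$-complete posets along disjoint Dynkin diagrams are $\Gamma$-colored $d$-complete.

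Next I would split into the finite and infinite cases for a connected component $P_k$. If $P_k$ is finite, Theorem \ref{TheoremMSequivJRS} identifies it with a connected dominant minuscule heap colored by $\Gamma_k$. Applying the slant sum decomposition discussed just before the theorem, I would deconstruct $P_k$ by removing all slant edges into slant irreducible dominant minuscule heaps, each appearing in the Proctor--Stembridge classification in \cite{DDCT,Ste}, and exhibit $P_k$ as a finite slant sum of such pieces. Conversely, any finite slant sum of slant irreducible dominant minuscule heaps yields a connected dominant minuscule heap, hence a connected $\Gamma_k$-colored $d$-complete poset again by Theorem \ref{TheoremMSequivJRS}.

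If instead $P_k$ is infinite (and connected), Theorem \ref{TheoremClassifyInfinite} identifies it as an order filter of some connected full heap colored by $\Gamma_k$. By Green's classification \cite[Thm. 6.6.2]{Gre}, together with McGregor-Dorsey's result \cite[Thm. 4.7.1]{McD} that connected Dynkin diagrams with finitely many nodes coloring a full heap must be of affine type (and Corollary \ref{CorPConnectediffGammaConnected} to pass from connectedness of $P_k$ to connectedness of $\Gamma_k$), such connected full heaps are precisely those appearing in Green's list. Conversely, Theorem \ref{TheoremClassifyInfinite} also shows that every filter of a connected full heap is a connected infinite $\Gamma_k$-colored $d$-complete poset.

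Finally, I would combine the two directions: given $P$ as in the statement, the pieces are $\Gamma_k$-colored $d$-complete and Proposition \ref{PropConnected} assembles them into a $\Gamma$-colored $d$-complete poset; given $P$ that is $\Gamma$-colored $d$-complete, Proposition \ref{PropConnected} produces the disjoint-union decomposition, and the two previous paragraphs realize each component in the stated form. The only real obstacle is keeping the compatibility between the poset decomposition and the Dynkin-diagram decomposition (both for the reduction step and for the two classifications), and that bookkeeping is entirely handled by Proposition \ref{PropConnected} and Corollary \ref{CorPConnectediffGammaConnected}, so no substantively new argument is needed beyond citing Theorems \ref{TheoremMSequivJRS}, \ref{TheoremClassifyInfinite}, and the classifications of Proctor, Stembridge, Green, and McGregor-Dorsey.
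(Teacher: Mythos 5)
Your proposal is correct and follows essentially the same route as the paper, which assembles Theorem \ref{TheoremClassify} from Proposition \ref{PropConnected}, Theorems \ref{TheoremMSequivJRS} and \ref{TheoremClassifyInfinite}, the slant sum decomposition, and the prior classifications of Proctor, Stembridge, Green, and McGregor-Dorsey, with Corollary \ref{CorPConnectediffGammaConnected} handling the connectivity bookkeeping. No substantive differences.
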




\section{Weyl group elements and upper \texorpdfstring{$P$}{P}-minuscule representations}\label{RepInteractions}

We describe in this section how the above work applies to Kac--Moody theory.
The algebraic terms stated in this section are defined precisely in \cite{Kum}.
Let $\Gamma$ be a Dynkin diagram and fix an ordering of its colors.  Then $[\theta_{ab}]$ is a \emph{generalized Cartan matrix}.  This matrix may be \emph{symmetrizable} or \emph{indecomposable}.  It may be used to construct a \emph{Kac--Moody algebra} $\mathfrak{g}$ with \emph{Cartan subalgebra} $\mathfrak{h}$ and \emph{standard Borel subalgebra} $\mathfrak{b}$.  

The definition of Kac--Moody algebra in \cite{Kum} agrees with the definition in \cite{Kac} when $[\theta_{ab}]$ is symmetrizable.  This is always the case when $P$ is a $\Gamma$-colored $d$-complete poset.



\begin{proposition}\label{PropSymmetrizable}
Let $P$ be a $\Gamma$-colored $d$-complete poset.  Then the corresponding generalized Cartan matrix $[\theta_{ab}]$ is symmetrizable.
\end{proposition}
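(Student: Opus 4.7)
The plan is to reduce to the connected case via Proposition \ref{PropConnected}, then split by cardinality and apply the classification results obtained earlier. By Proposition \ref{PropConnected}, $P$ is the disjoint union of connected $\Gamma_k$-colored $d$-complete posets $P_k$, where $\Gamma$ is the disjoint union of the connected Dynkin diagrams $\Gamma_k$. Since $\theta_{ab}=0$ whenever $a$ and $b$ lie in different components, a block-diagonal combination of symmetrizers for each $[\theta^{(k)}_{ab}]$ gives a symmetrizer for $[\theta_{ab}]$. So I may assume $P$ is connected.

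At this point I would invoke the standard criterion that $[\theta_{ab}]$ is symmetrizable if and only if for every cycle $a_1,a_2,\dots,a_\ell,a_{\ell+1}=a_1$ in $\Gamma$,
\[
\prod_{i=1}^{\ell}\theta_{a_i,a_{i+1}} \;=\; \prod_{i=1}^{\ell}\theta_{a_{i+1},a_i}.
\]
If $P$ is finite, Theorem \ref{TheoremMSequivJRS} identifies $P$ with a dominant minuscule heap; axiom S4 then forces $\Gamma$ to be acyclic, the cycle criterion holds vacuously, and $[\theta_{ab}]$ is symmetrizable. If $P$ is infinite, Theorem \ref{TheoremClassifyInfinite} realizes $P$ as an order filter of some connected full heap $Q$. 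Combining Green's Theorem 6.6.2 of \cite{Gre} with McGregor-Dorsey's Theorem 4.7.1 of \cite{McD}, the Dynkin diagram coloring $Q$ is of affine Kac--Moody type, and it is classical that every affine generalized Cartan matrix is symmetrizable.

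The one small subtlety is confirming that the Dynkin diagram coloring the filter $P$ agrees with the one coloring the ambient full heap $Q$, so that symmetrizability of $Q$'s affine Cartan matrix transfers to $P$. This is handled by noting that any color $a$ of $Q$ can be joined to some color actually appearing in $P$ through a path in the connected Dynkin diagram, and then AC, G3, and closure of $P$ upward propagate the nonemptiness of $P_a$ along such a path. Beyond this bookkeeping, the argument is essentially a case distinction driven entirely by the classification machinery, so the main obstacle is not a hidden computation but simply the need to assemble the classification theorems in the right order.
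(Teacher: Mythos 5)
Your proposal is correct and follows essentially the same route as the paper: reduce to the connected/indecomposable case, use Theorem \ref{TheoremMSequivJRS} and acyclicity (S4) in the finite case, and use Theorem \ref{TheoremClassifyInfinite} together with Green and McGregor-Dorsey to land in affine type in the infinite case. The only cosmetic difference is that where you dismiss the symmetrizability criterion as vacuous for an acyclic diagram (valid when the criterion is stated over simple cycles), the paper explicitly verifies the closed-walk identity $\theta_{a_1,a_2}\cdots\theta_{a_k,a_1}=\theta_{a_2,a_1}\cdots\theta_{a_1,a_k}$ and cites \cite[Lem.~15.15]{Carter}.
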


\begin{proof}
We may assume that $[\theta_{ab}]$ is indecomposable.  Then $\Gamma$ is connected, as is $P$ by Corollary \ref{CorPConnectediffGammaConnected}.
If $P$ is infinite, then it must be a filter of some connected full heap by Theorem \ref{TheoremClassifyInfinite} and hence colored by a Dynkin diagram of affine type.  Every affine generalized Cartan matrix is symmetrizable.

Now assume $P$ is finite and thus a dominant minuscule heap by Theorem \ref{TheoremMSequivJRS}.  Fix $k \ge 1$ and colors $a_1,\dots,a_k \in \Gamma$.  
We show that $\theta_{a_1,a_2} \theta_{a_2,a_3} \cdots \theta_{a_k,a_1} = 
\theta_{a_2,a_1} \theta_{a_3,a_2} \cdots \theta_{a_1,a_k}$.  
Note $\theta_{ab} = 0$ if and only if $\theta_{ba} = 0$ and $\theta_{ab} = 2$ if and only if $\theta_{ba} = 2$ for all $a,b \in \Gamma$, so assume no such terms appear in either product.  Hence $\theta_{a_1,a_2} \theta_{a_2,a_3} \cdots \theta_{a_k,a_1}$ is the product formed by multiplying generalized Cartan matrix entries along the path $a_1,a_2,\dots,a_k,a_1$ starting and ending at $a_1$ in $\Gamma$.  Since $\Gamma$ is acyclic by S4, an edge from $a$ to $b$ is traversed on this path (with multiplicity) if and only if the reverse edge from $b$ to $a$ is traversed (with the same multiplicity).  Hence $\theta_{ab}$ and $\theta_{ba}$ appear the same number of times in $\theta_{a_1,a_2} \theta_{a_2,a_3} \cdots \theta_{a_k,a_1}$ for every $a,b \in \Gamma$, so $\theta_{a_1,a_2} \theta_{a_2,a_3} \cdots \theta_{a_k,a_1} = \theta_{a_2,a_1} \theta_{a_3,a_2} \cdots \theta_{a_1,a_k}$.  Then $[\theta_{ab}]$ is symmetrizable by \cite[Lem. 15.15]{Carter}.
\end{proof}

The \emph{Kac--Moody derived subalgebra} is $\mathfrak{g}' := [\mathfrak{g},\mathfrak{g}]$.  In \cite{Unify} we defined the \emph{Borel derived subalgebra} $\mathfrak{b}' := \mathfrak{b} \cap \mathfrak{g}'$.  This subalgebra is generated by the symbols $\{x_a,h_a\}_{a \in \Gamma}$ subject to the relations


\begin{itemize}[nosep]
    \item [] (XX) \  $\underbrace{[x_a,[x_a,\dots,[x_a}_{\text{$1 - \theta_{ba}$ times}},x_b] \dots ]] = 0$ for all $a,b \in \Gamma$ such that $a \ne b$,
    \item [] (HH) \ $[h_b,h_a] = 0$ for all $a,b \in \Gamma$, and
    \item [] (HX) \ $[h_b,x_a] = \theta_{ab} x_a$ for all $a,b \in \Gamma$.
\end{itemize} 





The elements $\{h_b\}_{b \in \Gamma}$ are in $\mathfrak{h}$ and are the \emph{simple coroots}.  The \emph{simple roots} $\{\alpha_b\}_{b \in \Gamma}$ in the dual space $\mathfrak{h}^*$ satisfy $\alpha_b(h_c) = \theta_{bc}$ for $b,c \in \Gamma$.  Acting on $\mathfrak{h}^*$ is the \emph{Weyl group} $W$ with \emph{simple generators} $\{s_b\}_{b \in \Gamma}$ via the rule $s_b(\lambda) := \lambda - \lambda(h_b)\alpha_b$ for $b \in \Gamma$ and $\lambda \in \mathfrak{h}^*$.  Every element $w \in W$ can be written as a product of simple generators, and such products of minimal \textit{length} are \emph{reduced expressions} for $w$.  An element $\lambda \in \mathfrak{h^*}$ is an \emph{integral weight} if $\lambda(h_b) \in \mathbb{Z}$ for all $b \in \Gamma$ and a \emph{dominant integral weight} if $\lambda(h_b) \in \mathbb{Z}_{\ge 0}$ for all $b \in \Gamma$.  If $\lambda$ is an integral weight, then D. Peterson defined \cite{Car} an element $w \in W$ to be \emph{$\lambda$-minuscule} if $w = s_{b_k} \cdots s_{b_1}$ for some $k \ge 1$ and $s_{b_j}(s_{b_{j-1}} \cdots s_{b_1})(\lambda) = (s_{b_{j-1}} \cdots s_{b_1})(\lambda) - \alpha_{b_j}$ for all $1 \le j \le k$.  

Section 3 of \cite{Ste} describes the equivalence between dominant minuscule heaps (i.e. finite $\Gamma$-colored $d$-complete posets) and $\lambda$-minuscule Weyl group elements for dominant integral weights $\lambda$.  Linear extensions of the former correspond to reduced expressions of the latter; see also \cite[Cor. 5.5]{Wave}.  The following result confirms a claim made by Proctor in Section 15 of \cite{DDCT}.  This claim was made as an application of his classification of $d$-complete posets (the dominant minuscule heaps colored by simply laced Dynkin diagrams).  Our proof uses Proposition \ref{PropProctorProof} instead of the classifications of Proctor and Stembridge.

\begin{proposition}
Let $\lambda$ be a dominant integral weight.  Then there is no infinite sequence $a_1,a_2,\dots$ of colors such that $s_{a_k} \cdots s_{a_1}$ is $\lambda$-minuscule for every $k \ge 1$.
\end{proposition}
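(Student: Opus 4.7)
The plan is to derive a contradiction from Proposition~\ref{PropProctorProof} by building, from the hypothesized infinite sequence, an infinite $\Gamma$-colored $d$-complete poset in which every color chain is bounded above. By the correspondence between $\lambda$-minuscule Weyl group elements and dominant minuscule heaps from Section~3 of~\cite{Ste}, together with Theorem~\ref{TheoremMSequivJRS}, every $w_k := s_{a_k} \cdots s_{a_1}$ corresponds to a finite $\Gamma$-colored $d$-complete heap $H_k$ of $k$ elements whose linear extensions are the reduced expressions of $w_k$. In the linear extension read off from the chosen reduced expression $s_{a_k} \cdots s_{a_1}$, the letter $a_j$ sits at height $k+1-j$, so $a_1$ is at the top and $a_k$ is at the bottom.

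I would then describe the canonical embedding $H_k \hookrightarrow H_{k+1}$. Passing from $w_k$ to $w_{k+1} = s_{a_{k+1}} w_k$ prepends a single generator to the reduced expression; in Stembridge's heap this corresponds to inserting exactly one new element, of color $a_{k+1}$, strictly below the whole old heap, while leaving the colors and mutual order relations of the original $k$ positions unchanged. Hence each $H_k$ embeds into $H_{k+1}$ as an order filter, and the nested union $H := \bigcup_k H_k$ is a well-defined infinite $\Gamma$-colored poset in which every $H_k$ appears as a filter.

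Next I would verify that $H$ is $\Gamma$-colored $d$-complete. Properties EC, NA, AC, and ICE2 have local witnesses, so they transfer from each $H_k$ to $H$: any pair of elements of $H$, or any interval between consecutive same-color elements, lies inside some $H_k$, where the required property already holds. The key check is UCB1. For each $b \in \Gamma$ with $P_b \ne \emptyset$, let $j_0 := \min\{j : a_j = b\}$ and let $x$ be the corresponding element of $H$, which is already the maximum of color $b$ in $H_{j_0}$. Because each subsequent extension only adds elements that lie below the old filter, no element of $H$ above $x$ can lie outside $H_{j_0}$, so $U(x, H) = U(x, H_{j_0})$ and the UCB1 estimate for the dominant minuscule heap $H_{j_0}$ passes verbatim to $H$.

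The same persistence-of-maxima observation forces every nonempty color chain of $H$ to be bounded above, so $H$ satisfies CBA. Then Proposition~\ref{PropProctorProof} forces $H$ to be finite, contradicting the fact that $H$ contains one element for each of the infinitely many indices $j$. The main obstacle I anticipate is cleanly justifying the filter embedding $H_k \hookrightarrow H_{k+1}$, i.e., that prepending one generator to the reduced expression of a $\lambda$-minuscule element adds exactly one new bottom element and leaves the remaining heap structure intact; this is where the heap-theoretic formulation of $\lambda$-minusculity from~\cite{Ste} is doing the essential work. Once that bookkeeping is settled, every other step is a routine transfer of an already-established property through the direct limit.
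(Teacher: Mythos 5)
Your proposal is correct and follows essentially the same route as the paper's proof: form the nested union of the dominant minuscule heaps corresponding to the $w_k$, transfer EC, NA, AC, and ICE2 by locality, observe that the maximal elements of each color stabilize so that UCB1 and CBA hold in the union, and contradict Proposition~\ref{PropProctorProof}. One minor inaccuracy: the new element of color $a_{k+1}$ is merely \emph{a minimal element} of $H_{k+1}$, not necessarily below the whole of $H_k$, but since your argument only uses minimality (and the resulting filter embeddings), nothing breaks.
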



\begin{proof}
Assume for a contradiction that $a_1,a_2,\dots$ is an infinite sequence of colors such that $w_k := s_{a_k} \cdots s_{a_1}$ is $\lambda$-minuscule for every $k \ge 1$.  Let $\Gamma' \subseteq \Gamma$ be the subdiagram whose nodes are the colors appearing in this sequence.  Let $j \ge 1$ be so that $w_j$ contains simple reflections corresponding to each color in $\Gamma'$.  For every $k \ge j$, let $P_k$ be the dominant minuscule heap (i.e. finite $\Gamma'$-colored $d$-complete poset) corresponding to $w_k$.  For every $k > j$, the poset $P_k$ can be produced by adding some minimal element $x_k$ to $P_{k-1}$.  Let $P := P_j \cup \left( \bigcup_{k > j} \{x_k\} \right)$ and give $P$ the partial order that is the reflexive transitive closure of the order on $P_j$ and the covering relations resulting from the additions of the elements in $\{x_k\}_{k > j}$.  The poset $P$ satisfies EC, NA, AC, and ICE2; any occurrence of neighbors or elements with equal or adjacent colors in $P$ occurs in $P_k$ for some $k \ge j$, and $P_k$ satisfies these properties.  The poset $P$ also has the same set of maximal elements of each color as $P_j$, so $P$ satisfies UCB1.  Hence $P$ is an infinite $\Gamma'$-colored $d$-complete poset for which $P_b$ is bounded above for all $b \in \Gamma$.  This violates Proposition \ref{PropProctorProof}.
\end{proof}

Let $P$ be a locally finite poset colored by a Dynkin diagram $\Gamma$.  Let $\mathcal{FI}(P)$ be the set of all pairs $(F,I)$, where $F$ is a filter of $P$ and $I := P - F$ is its corresponding ideal.  We call these the \emph{splits} of $P$.  Let $V := \langle \mathcal{FI}(P) \rangle$ be the free complex vector space generated by $\mathcal{FI}(P)$.  Denote the vector corresponding to a split $(F,I)$ as $\langle F,I \rangle$.  Let $(F,I) \in \mathcal{FI}(P)$ and $a \in \Gamma$.  If there are finitely many minimal elements in $F$ of color $a$, then define $X_a. \langle F,I \rangle := \sum \langle F - \{x\}, I \cup \{x\} \rangle$, where the sum is taken over the minimal elements of color $a$ in $F$.  If these actions can be defined for all $a \in \Gamma$ and $(F,I) \in \mathcal{FI}(P)$, then extend the $X_a$ linearly to $V$ for all $a \in \Gamma$.  The operators $\{X_a\}_{a \in \Gamma}$ are the \emph{color raising} operators for $\mathcal{FI}(P)$.  

In \cite{Unify}, we made the following definitions.

\begin{definition}\label{DefCarries}
Let $P$ be a $\Gamma$-colored poset and suppose that the color raising and lowering operators $\{X_a\}_{a \in \Gamma}$ are defined on $V$.  Let $[A,B] := AB - BA$ be the commutator on $\text{End}(V)$.  We say $\mathcal{FI}(P)$ \emph{carries a representation} of $\mathfrak{b}'$ if there exist diagonal operators $\{H_a\}_{a \in \Gamma}$ on $V$ (with respect to the basis of splits) such that $\{X_a,H_a\}_{a \in \Gamma}$ satisfy XX and HX with respect to the commutator on $\text{End}(V)$.
\end{definition}

\noindent Note the relation HH holds automatically since the operators $\{H_a\}_{a \in \Gamma}$ are diagonal on $V$.  So in Definition \ref{DefCarries}, all of the relations for $\mathfrak{b}'$ hold for the operators $\{X_a,H_a\}_{a \in \Gamma}$ with respect to the commutator on $\text{End}(V)$.   Thus the maps $x_a \mapsto X_a$ and $h_a \mapsto H_a$ for all $a \in \Gamma$ induce a Lie algebra homomorphism from $\mathfrak{b}'$ to $\mathfrak{gl}(V)$.  Therefore $V$ is a representation of $\mathfrak{b}'$.  We say this representation is \emph{carried by $\mathcal{FI}(P)$}.

\begin{definition}
A representation of $\mathfrak{b}'$ carried by $\mathcal{FI}(P)$ 
is \emph{upper $P$-minuscule} if 
$X_a^2 = 0$ for all $a \in \Gamma$, 
the eigenvalues of $\{H_a\}_{a \in \Gamma}$ on the basis of splits are in the set $\{-1,0,1,2,\dots\}$, and for all $a \in \Gamma$ we have $H_a.\langle F,I \rangle = -\langle F,I \rangle$ if and only if $F$ has a minimal element of color $a$. 
\end{definition}

Finite dimensional upper $P$-minuscule representations are restrictions of the actions from $\mathfrak{b}$ to $\mathfrak{b}'$ of \emph{Demazure modules} that correspond to $\lambda$-minuscule Weyl group elements for dominant integral weights $\lambda$.  Infinite dimensional upper $P$-minuscule representations are new and first appeared in \cite{Str,Unify}.  These representations share several properties with \textit{minuscule representations} of semisimple Lie algebras (i.e. irreducible highest weight representations whose weights are all in the Weyl group orbit of the highest weight).  It has been known since at least the 1980s that minuscule representations of semisimple Lie algebras can be realized using a similar poset construction; see also \cite{PaperClassifyMinuscule}.

We state one of our main results from \cite{Unify}. This result was obtained in the simply laced case as Theorem 38(a) of \cite{Unify} and in the general case as Theorem 6.1.1(a) of \cite{Str}.

\begin{theorem}\label{UnifyMain} 
Let $P$ be a $\Gamma$-colored poset.  Then $P$ is $\Gamma$-colored $d$-complete poset if and only if $\mathcal{FI}(P)$ carries an upper $P$-minuscule representation.
\end{theorem}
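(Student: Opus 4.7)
The plan is to prove both directions of the equivalence by making explicit use of the machinery developed in Sections \ref{PropertyRelationships}--\ref{SectionTwoImportantPropositions}, in particular the upper frontier census $U_b(I)$ and Lemma \ref{LemUpperFrontier}(a) together with Remark \ref{RemUpperandLowerSumAdditionalElement}.

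For the forward direction, assume $P$ is $\Gamma$-colored $d$-complete. First I would verify that the color raising operators $X_a$ are well-defined: by EC, the set of minimal elements of color $a$ in any filter $F$ has at most one element (two such would be incomparable of equal color), so the defining sum is finite. Next I would define diagonal operators $H_b$ by the rule $H_b.\langle F,I\rangle := (1 - U_b(I))\langle F,I\rangle$ whenever $U_b(I)$ is defined, with a parallel convention in the boundary case $P_b \cap I = \emptyset$ (e.g., assign a suitable non-negative eigenvalue determined by the structure of $F$ near its bottom of color $b$). By Lemma \ref{LemUpperFrontier}(a)(i) the eigenvalues lie in $\{-1,0,1\}$, and by Lemma \ref{LemUpperFrontier}(a)(ii) the value $-1$ is attained precisely when $F$ contains a minimal element of color $b$, giving the upper $P$-minuscule normalization. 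The equation $X_a^2=0$ is then immediate from EC (no filter has two minimal elements of a common color). The relation HX reduces to a direct calculation: applying $X_a$ turns $\langle F,I\rangle$ into $\langle F-\{y\}, I\cup\{y\}\rangle$ for the unique minimal $y\in F_a$, and by Remark \ref{RemUpperandLowerSumAdditionalElement} the census $U_b$ changes by $-\theta_{ab}$, so the eigenvalue of $H_b$ changes by exactly $\theta_{ab}$, which is HX.

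For the Serre relations XX, I would argue color-pair by color-pair. The case $\theta_{ab}=0$ (distant colors) gives $[X_a,X_b]=0$ because AC forces any minimal $a$-colored element of $F$ and any minimal $b$-colored element of $F$ to remain minimal after each other is removed, so the two orders of moves produce identical terms. For $\theta_{ba}=-1$ or $\theta_{ba}=-2$, the required vanishing of $(\mathrm{ad}\,X_a)^{1-\theta_{ba}}(X_b)$ would be established by tracking what happens at consecutive $a$-colored elements using ICE2 and Definition \ref{dominantminusculeheap}(S2), which describes exactly which colors sit between consecutive $a$-colored elements; this forces the iterated bracket terms to cancel in pairs.

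For the converse, assume $\mathcal{FI}(P)$ carries an upper $P$-minuscule representation. I would extract the axioms one by one. EC follows from $X_a^2=0$ applied to the filter generated by any two incomparable candidates of color $a$, which would contribute a nonzero double-descent term; NA follows from the $\theta_{ab}=0$ case of XX applied to a carefully chosen split straddling an offending cover of non-adjacent colors; AC is obtained similarly by exploiting that $X_a X_b$ and $X_b X_a$ would produce incompatible terms if two elements of adjacent colors were incomparable. ICE2 is read off from the HX relation: starting at a split whose filter has a minimal element $y$ of color $a$, moving $y$ down across all of $(x,y)$ via $X_{\kappa(z)}$'s must return $H_a$ to eigenvalue $-1$ at the next consecutive $a$-colored element, and Remark \ref{RemUpperandLowerSumAdditionalElement} forces the census sum to equal $2$. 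Finally, UCB1 follows from the fact that eigenvalues of $H_a$ lie in $\{-1,0,1,2,\dots\}$, combined with the formula $H_a = 1 - U_a(I)$ derived above applied to a split whose filter has $x$ maximal-in-$P_a$ just moved into $I$.

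The main obstacle is the verification of the general Serre relation XX in the multiply laced case: when $\theta_{ba}=-2$, one has to show $(\mathrm{ad}\,X_a)^3(X_b)=0$, which requires a careful combinatorial analysis of which splits appear in the expansion and a term-by-term cancellation relying on the precise structure of S2(ii) and of 2-adjacencies within $P$. A secondary difficulty is treating uniformly the boundary cases in which $U_b(I)$ or $L_b(F)$ fails to be defined (when $P_b\cap I$ or $P_b\cap F$ is empty), since the definitions of $H_b$ and the verifications of HX and HH must be extended consistently to these splits; this is where the hypothesis that $P$ is $\Gamma$-colored $d$-complete (rather than merely satisfying a subset of the axioms) is genuinely used.
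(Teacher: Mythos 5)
You should first be aware that this paper does not actually prove Theorem \ref{UnifyMain}: it is imported verbatim from Theorem 38(a) of \cite{Unify} (simply laced case) and Theorem 6.1.1(a) of \cite{Str} (general case), so there is no in-paper argument to compare against. Your overall strategy --- building the diagonal operators from the upper frontier censuses via Lemma \ref{LemUpperFrontier} and Remark \ref{RemUpperandLowerSumAdditionalElement} for the forward direction, and extracting the five axioms one at a time from the operator relations for the converse --- is the right shape. But several of the mechanisms you propose are wrong or misplaced, so the sketch has genuine gaps.

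In the forward direction you have misidentified where the work in XX lies. Once $X_a^2=0$ is known, every monomial in the expansion of $(\mathrm{ad}\,X_a)^{k}(X_b)$ for $k\ge 3$ contains a consecutive factor $X_a^2$, so the case $\theta_{ba}=-2$ (your stated ``main obstacle'') is formally trivial. The only non-formal cases are $[X_a,X_b]=0$ for distant colors and $X_aX_bX_a=0$ when $\theta_{ba}=-1$; in the latter the surviving monomial does not ``cancel in pairs'' but annihilates every split outright, because a nonzero term would require consecutive $a$-colored elements $x<x'$ with $(x,x')=\{y\}$ and $\kappa(y)=b$, whence ICE2 forces $-\theta_{ba}=2$, contradicting $1$-adjacency. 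In the converse direction, your derivation of AC cannot work as stated: for adjacent colors $a\sim b$ there is no commutation relation between $X_a$ and $X_b$, so ``incompatible terms'' in $X_aX_b$ versus $X_bX_a$ yield no contradiction. The correct route is through HX together with the eigenvalue bound: if $u,v$ are incomparable with adjacent colors, the filter they generate has $H_a$-eigenvalue $-1$, and moving $v$ down drops it to $-1+\theta_{ba}\le -2$. Your UCB1 extraction has a related flaw, since it invokes the explicit formula $H_a=1-U_a(I)$, which is a forward-direction construction and not available for an arbitrary carried representation; one must instead propagate eigenvalues move-by-move starting from the filter generated by a maximal element of $P_a$, using only HX and the $-1$-normalization. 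Finally, the ``boundary case'' you defer --- defining $H_b$ on splits with $P_b\cap I$ empty or without a maximal element so that HX still holds when an adjacent-colored element is moved down --- is not a side issue: it is precisely the content of the $\mu$-diagonal operators of \cite{Unify,Str}, and without it the forward direction is incomplete.
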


\noindent Applying Theorem \ref{TheoremClassify} obtains precisely when an upper $P$-minuscule representation can be built.



\begin{theorem}\label{TheoremUpperPMinusculeClassification}
Let $P$ be a $\Gamma$-colored poset.  Then $\mathcal{FI}(P)$ carries an upper $P$-minuscule representation if and only if $P$ is the disjoint union of connected posets in which each is either a finite slant sum of slant irreducible dominant minuscule heaps from \cite{DDCT,Ste} or an infinite filter of some connected full heap from \cite{Gre}.  
\end{theorem}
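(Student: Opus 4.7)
The plan is to simply chain together the two main results already established in the paper, so the argument should be short and conceptual rather than requiring new machinery. I would begin by invoking Theorem \ref{UnifyMain}, which gives the equivalence between $\mathcal{FI}(P)$ carrying an upper $P$-minuscule representation and $P$ being a $\Gamma$-colored $d$-complete poset. This reduces the classification of carriers of upper $P$-minuscule representations to a purely combinatorial classification of $\Gamma$-colored $d$-complete posets.

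Next I would apply Theorem \ref{TheoremClassify}, the main classification result, which states that $P$ is $\Gamma$-colored $d$-complete if and only if it decomposes as a disjoint union of connected posets, each of which is either a finite slant sum of slant irreducible dominant minuscule heaps from \cite{DDCT,Ste} or an infinite filter of a connected full heap from \cite{Gre}. Composing these two biconditionals yields the desired biconditional directly.

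Since the statement to be proved is literally the conjunction of the right-hand side of Theorem \ref{UnifyMain} with the right-hand side of Theorem \ref{TheoremClassify}, there is no real obstacle; the proof should be one or two sentences. The only minor point worth mentioning explicitly is that this chaining is valid in both directions: the forward direction uses Theorem \ref{UnifyMain} to pass from representation-theoretic data to the $\Gamma$-colored $d$-complete property, then Theorem \ref{TheoremClassify} to obtain the structural description; the reverse direction runs Theorem \ref{TheoremClassify} to deduce $\Gamma$-colored $d$-completeness from the structural hypothesis, then Theorem \ref{UnifyMain} to construct the representation. No new lemmas or computations are needed.
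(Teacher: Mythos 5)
Your proposal is correct and is exactly the paper's argument: the theorem is obtained by composing the biconditional of Theorem \ref{UnifyMain} with that of Theorem \ref{TheoremClassify}, which is precisely what the paper does in the sentence immediately preceding the theorem statement. No gaps; nothing further is needed.
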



\noindent In general, there may be infinitely many possibilities for the diagonal operators $\{H_a\}_{a \in \Gamma}$ used to construct an upper $P$-minuscule representation from a $\Gamma$-colored $d$-complete poset.  The ``$\mu$-diagonal'' operators of \cite{Str,Unify} may always be chosen.  These operators are our preferred choice, since they are the unique operators providing the actions of $\{h_a\}_{a \in \Gamma}$ for the ``$P$-minuscule'' representations of $\mathfrak{g}'$.  They are also the unique choice of operators for upper $P$-minuscule representations of $\mathfrak{b}'$ when $P$ is finite.  The claims made in the previous three sentences are respectively confirmed in Theorems 30 and 35 and Corollary 29 of \cite{Unify} for the simply laced case and Theorems 5.3.1 and 5.4.2 and Corollary 4.5.2 of \cite{Str} for the general case.  Theorem \ref{TheoremUpperPMinusculeClassification} thus classifies the upper $P$-minuscule representations built using the $\mu$-diagonal operators.

\section*{Acknowledgements}
I would like to thank Robert A. Proctor for several helpful comments on notation, terminology, and exposition.  I would also like to thank Marc Besson and Sam Jeralds for helpful comments.

\bibliographystyle{amsplain}

\end{spacing}
\end{document}